\newtheorem{thm}{Theorem}[section]
\newtheorem{lemma}[thm]{Lemma}
\newtheorem{prop}[thm]{Proposition}
\theoremstyle{definition}
\newtheorem{defn}[thm]{Definition}
\theoremstyle{remark}
\newtheorem{rem}[thm]{Remark}
\numberwithin{equation}{section}
\newcommand{\norma}[1]{\|#1\|}
\newcommand{\thetabar}{\bar{\theta}}
\newcommand{\N}{\mathbb{N}}
\newcommand{\R}{\mathbb{R}}
\def\build#1_#2^#3{\mathrel{\mathop{\kern 0pt#1}\limits_{#2}^{#3}}}
\newcommand{\normasemplice}[1]{\|#1\|}
\newcommand{\dys}{\displaystyle}
\newcommand{\matrice}[4]
{\left(\begin{array}{cc} #1&#2
\\
#3&#4
\end{array} \right) }
\newcommand{\matricesimmetrica}[3]
{\left(\begin{array}{cc} #1&#2
\\
#2&#3
\end{array} \right) }
\newcommand{\rife}[1]{(\ref{#1})}
\begin{document}

\title[A differential inclusion: the case of an isotropic set]
{A differential inclusion: the case of an isotropic set}
\author{Gisella Croce}
\address{D\'epartement de Math\'ematiques\\EPFL\\1015 Lausanne}
\email{gisella.croce@epfl.ch}%

\thanks{Research supported by Fonds National Suisse (21-61390-00).}
\subjclass{35F30, 34A60, 52A30}

\keywords{Rank one convex hull; polyconvex hull; differential
inclusion; isotropic set}

\begin{abstract}
In this article we are interested in the following problem: to
find a map $u: \Omega \to \R^2$ that satisfies
\[
\left\{
\begin{array}{ll}
D u \in E\,\, &\mbox{a.e. in } \Omega
\vspace{0.1cm}\\
u(x)=\varphi(x) &x \in \partial \Omega \vspace{0.1cm}
\end{array}
\right.
\]
where $\Omega$ is an open set of $\R^2$ and $E$ is a
compact isotropic set of $\R^{2\times 2}$. We will show an
existence theorem under suitable hypotheses on $\varphi$.
\end{abstract}
\maketitle
\section{Introduction}
In this article we study the following problem: let $\Omega$ be an
open set of $\R^2$; we investigate the existence of maps $u:
\Omega \to \R^2$ (weakly differentiable) that satisfy
\begin{equation}
\left\{
\begin{array}{ll}
D u \in E\,\, &\mbox{a.e. in } \Omega
\vspace{0.1cm}\\
u(x)=\varphi(x) &x \in \partial \Omega \vspace{0.1cm}
\end{array}
\right. \label{problema di partenza}
\end{equation}
where $\varphi:\overline{\Omega}\to \R^2$ is sufficiently regular
and $E$ is a compact isotropic set of $\R^{2\times 2}$ (that is
$AEB\subseteq E$ for every $A,B\in \mathcal{O}(2)$). In an
equivalent way $E$ can be written as
\begin{equation}\label{notreE}
E=\{\xi \in \R^{2\times 2}: (\lambda_1(\xi), \lambda_2(\xi))\in K
\}\,,
\end{equation}
for some compact set $K\subset T=\{(x,y)\in \R^{2}: 0\leq x\leq
y\}$, where we have denoted by $\lambda_1(\xi)\leq \lambda_2(\xi)$
the singular values of the matrix $\xi$, that is the eigenvalues
of the matrix $\sqrt{\xi \xi^t}$, which are
\begin{equation}
\begin{array}{l}
\displaystyle \lambda_1(\xi)=\frac 12
\left[\sqrt{\normasemplice{\xi}^2+2
|\det(\xi)|}-\sqrt{\normasemplice{\xi}^2-2 |\det(\xi)|}\right]
\vspace{0.15cm}
\\
\displaystyle \lambda_2(\xi)=\frac 12
\left[\sqrt{\normasemplice{\xi}^2+2
|\det(\xi)|}+\sqrt{\normasemplice{\xi}^2-2 |\det(\xi)|}\right]\,.
\end{array}
\label{valorisingolari}
\end{equation}
We will assume throughout the article that
\begin{equation}
\min\{x, (x,y)\in K\}>0\,.
\label{ipotesidipositivitadeipuntidiK}
\end{equation}
 Thanks to the
properties of the singular values (see \cite{libromatrici}), the
problem (\ref{problema di partenza}) can be rewritten in the
following equivalent way:
\[
\left\{
\begin{array}{ll}
\norma{D u(x)}^2=a^2+b^2\,\, &\mbox{a.e. in } \Omega, (a,b)\in K,
\vspace{0.1cm}\\
|\text{det}\,D u(x)|=ab\,\, &\mbox{a.e. in } \Omega, (a,b)\in K,
\\
u(x)=\varphi(x) &x \in \partial \Omega\,.\end{array} \right.
\]
We will show the following existence theorem for the problem
(\ref{problema di partenza}):
\begin{thm}\label{teorema d'esistenza nel nostro caso dacorognamarcellini}
Let $E$ be defined by \rife{notreE} where $K\subset T$ is a
compact set such that $\min\{x, (x,y)\in K\}>0\,.$  Let
$\Omega\subset \R^2$ be an open set. Let $\varphi\in
C^1_{piec}(\overline{\Omega};\R^2)$ such that $D \varphi(x) \in
E\cup \textnormal{int Rco}E$ in $\Omega$ (where
$\textnormal{Rco}\,E$ denotes the rank one convex hull of $E$ and
$\textnormal{int Rco}E$ its interior). Then there exists a map $u
\in \varphi+W^{1,\infty}_0(\Omega; \R^2)$ such that $D u \in E$
a.e. in $\Omega$.
\end{thm}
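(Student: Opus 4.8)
The plan is to use the functional-analytic (Baire category) method of Dacorogna and Marcellini for differential inclusions, which is why the theorem is phrased exactly in the form it is. The heart of that method reduces the existence question to a purely local, matrix-geometric statement: it is enough to prove that the open set $O := \textnormal{int Rco}\,E$ has the \emph{relaxation property} with respect to $E$. Recall that this means that for every $\xi \in O$ and every bounded open set $D \subset \R^2$ there is a sequence of piecewise affine maps $u_\nu \in \xi x + W^{1,\infty}_0(D;\R^2)$ with $D u_\nu \in E \cup O$ a.e., $\sup_\nu \norma{D u_\nu}_{\infty} < \infty$, $u_\nu \to \xi x$ uniformly, and $\int_D \textnormal{dist}(D u_\nu; E)\,dx \to 0$. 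Once this is established, the abstract existence theorem of Dacorogna and Marcellini applies directly: for any $\varphi \in C^1_{piec}(\overline{\Omega};\R^2)$ with $D\varphi \in E \cup \textnormal{int Rco}\,E$ it produces $u \in \varphi + W^{1,\infty}_0(\Omega;\R^2)$ with $D u \in E$ a.e. (A general, possibly unbounded, $\Omega$ is handled by exhausting it with cubes on which $\varphi$ is close to affine.) So everything rests on verifying the relaxation property.

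The second step is to describe $\textnormal{Rco}\,E$ explicitly. Since $E$ is isotropic, i.e. invariant under $\mathcal{O}(2)\times\mathcal{O}(2)$, I expect $\textnormal{Rco}\,E$, $\textnormal{Pco}\,E$ and $\textnormal{Qco}\,E$ to coincide and to be isotropic as well, so that $\textnormal{Rco}\,E = \{\xi : (\lambda_1(\xi), \lambda_2(\xi)) \in \widehat K\}$ for a suitable hull $\widehat K \subset T$ of $K$. The description of $\widehat K$ should be read off from the convex functions $\xi \mapsto \lambda_2(\xi)$ (the operator norm) and $\xi \mapsto \lambda_1(\xi)+\lambda_2(\xi)$ (the trace norm), together with the polyconvex quantity $\lambda_1(\xi)\lambda_2(\xi) = |\det \xi|$: these are precisely the ingredients entering the known computation of hulls of singular-value sets. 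The positivity assumption $\min\{x : (x,y)\in K\} > 0$ ensures that $E$, and then $\textnormal{Rco}\,E$, stays in the region $\lambda_1 > 0$, where the singular values are smooth functions of $\xi$ and no degeneracy at $\det \xi = 0$ occurs; this is what keeps the geometry manageable.

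The third and main step is the verification of the relaxation property by iterated lamination carried out in the singular-value plane. Fixing $\xi \in O$, the key lemma to prove is that $\xi$ admits a rank-one decomposition $\xi = t A + (1-t) B$, $t \in (0,1)$, $\det(A - B) = 0$, with $A, B \in \textnormal{Rco}\,E$ whose singular-value pairs lie strictly closer to $K$ than that of $\xi$ — geometrically, splitting the point $(\lambda_1(\xi),\lambda_2(\xi))$ along an admissible direction toward $\partial \widehat K$. Replacing $\xi x$ by the standard fine laminate associated with $(A,B)$, and correcting on a boundary layer of arbitrarily small measure so as to restore the affine boundary datum, yields a piecewise affine map whose gradient lies in $E \cup O$, is uniformly bounded, and is substantially closer to $E$ while the map itself is uniformly close to $\xi x$. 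Iterating this construction, each time reducing $\int_D \textnormal{dist}(D\,\cdot\,; E)$, and extracting a diagonal sequence, produces the sequence $u_\nu$ required by the relaxation property.

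I expect the lamination lemma to be the main obstacle. The issue is to guarantee, for a \emph{general} compact $K$ (not just a single orbit, for which the construction is classical), that from every interior point one can always choose an admissible rank-one splitting moving toward $E$ while all intermediate matrices remain in $E \cup \textnormal{int Rco}\,E$. This is a genuinely geometric point: it requires knowing exactly which pairs of singular values are rank-one connectable in $\R^{2\times 2}$ and checking that $\widehat K$ is reachable from its interior by such admissible laminates. It is here that the isotropy and the positivity hypothesis \rife{ipotesidipositivitadeipuntidiK} play their essential role.
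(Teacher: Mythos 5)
Your overall framework is the right one --- this is indeed proved by the Baire category method, with the abstract theorem reducing everything to a property of $E$ and $\textnormal{Rco}\,E$ --- but your proposal stops exactly where the paper's work begins, and where it gestures at the content it guesses wrong. First, the hull. You propose to read off $\textnormal{Rco}\,E$ from the convex functions $\lambda_2$ and $\lambda_1+\lambda_2$ together with $|\det|$; in fact the trace norm $\lambda_1+\lambda_2$ plays no role here. The hull is cut out by the one-parameter family of inequalities $\lambda_1(\xi)\lambda_2(\xi)+\theta\left(\lambda_2(\xi)-\lambda_1(\xi)\right)\leq m(\theta)$ for all $\theta\in[0,\max_{(a,b)\in K}b]$, where $m(\theta)=\max_{(a,b)\in K}\{ab+\theta(b-a)\}$ (theorem \ref{rcoegise}); these come from the Cardaliaguet--Tahraoui polyconvex functions $H_\theta$ of proposition \ref{propfunzioneHtheta}, which involve the \emph{nonconvex} difference $\lambda_2-\lambda_1$, not the sum. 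Moreover $\textnormal{Rco}\,E=\textnormal{Pco}\,E$ is itself a theorem, not something to ``expect'': the paper proves it by showing $\partial\,\textnormal{Pco}\,E\subseteq\textnormal{Rco}\,E$, via a subdifferential analysis of the convex function $\theta\mapsto m(\theta)$ which localizes any boundary matrix into the rank one convex hull of at most \emph{two} orbits of $K$, and then an explicit two-point lemma (lemma \ref{numerofinitodipunti}) with computed rank-one directions and endpoints. This reduction to one or two points of a general compact $K$ is the paper's central idea, and it is absent from your sketch --- you yourself flag the ``lamination lemma'' as the main obstacle and do not prove it, so what you have is a plan rather than a proof.

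Second, your route through the relaxation property glosses over the difficulty it is supposed to solve. A laminate representation of $\xi\in\textnormal{int\,Rco}\,E$ (via $\textnormal{R}_i\textnormal{co}\,E$) may pass through matrices on $\partial\,\textnormal{Rco}\,E\setminus E$, which is forbidden since all gradients must lie in the set $E\cup\textnormal{int\,Rco}\,E$; perturbing such a laminate strictly inward while keeping it rank-one compatible is precisely the nontrivial stability issue. The paper circumvents this by verifying instead the \emph{approximation property} (proposition \ref{approximation}) with the explicit inner sets $E_\delta$ obtained by shrinking the singular values to $(a-\delta,b-\delta)$, and then quoting the Dacorogna--Pisante theorem \ref{Giovio}, which takes the approximation property as hypothesis. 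Even this step has real content your sketch omits: the inclusion $\textnormal{Rco}\,E_\delta\subseteq\textnormal{int\,Rco}\,E$ requires knowing that $\textnormal{int\,Rco}\,E$ is rank one convex, which the paper proves using the functions $H_\theta$ again, together with a separate argument (using that $\lambda_2$ is a norm) ruling out the endpoint $\theta=\max_{(a,b)\in K}b$; and condition 3 requires uniform convergence of $\max_{(a,b)\in K}\{(a-\delta)(b-\delta)+\theta(b-a)\}$ as $\delta\to 0$. So while your abstract entry point is legitimate and close in spirit to the paper's, the two pillars --- the identity $\textnormal{Rco}\,E=\textnormal{Pco}\,E$ with its explicit $\theta$-family description, and the inner approximation making the Baire machinery applicable --- are missing, and the hull-describing functions you name would give the wrong set for any $K$ with more than one orbit.
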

Our result will be a generalization of an existence theorem by
Dacorogna and Marcelli\-ni (see \cite{implicitdacomarc}), where
they investigated the case in which $K$ contains only one point,
but $K\subseteq \R^n,\, n\geq 2$.

To establish our theorem we will use an abstract existence result
proved by Dacorogna and Marcellini \cite{implicitdacomarc} and
then refined by Dacorogna and Pisante \cite{Giovio}, which is
based on a functional analytic method, that uses the Baire
category theorem. However we recall that the kind of problem like
(\ref{problema di partenza}) can also be solved by another method,
established by Gromov \cite{gromovbook} and then presented by
M\"{u}ller and \v{S}ver\'ak (see \cite{gromovintegration} for
example) in a more analytic manner.

Moreover it will be useful the representation of the rank one
convex hull of the set $E$ (we will show that the rank one convex
hull coincides with the polyconvex one): for this, we will apply
some results established by Cardaliaguet and Tahraoui in
\cite{card-tah}.

\section{Definitions and preliminary results}
This section is devoted to the study of the polyconvex hull of an
isotropic compact set of $\R^{2\times 2}$: this is useful to study
the rank one convex hull, as we will see in the next section.

We first give the definitions of polyconvex hull and rank one
convex hull of a set $E\subseteq\R^{2\times 2}\,:$ we will follow
the definitions of Dacorogna and Marcellini
\cite{implicitdacomarc}.
\begin{defn} Let $E\subseteq\R^{2\times 2};$  if
$\overline{\R}=\R\cup \{+\infty\}$, we define
\[
\begin{array}{lcl}\textnormal{Pco}\,E &= & \{\xi \in
\R^{2\times 2}: f(\xi)\leq 0, \forall\,\, f:\R^{2\times 2}\to
\overline{\R} \,\,\mbox{ polyconvex}\,\,, f|_{E}\leq 0 \} ;
\\\textnormal{Rco}\,E &=&\{\xi \in \R^{2\times 2}: f(\xi)\leq 0,
\forall\,\, f:\R^{2\times 2}\to \overline{\R} \,\,\mbox{rank one
convex}\,\,, f|_{E}\leq 0 \}
 .
\end{array}
\]
\end{defn}
\begin{rem}\label{rcoe}
{\it {i)}} We observe that, according to this definition, the rank
one convex hull of a compact set is not necessarily closed (some
examples can be found in \cite{kolar}).  According to our
definition $\textnormal{Rco}\,E$ is the smallest rank one convex
set which contains $E$ (see \cite{implicitdacomarc}). Some authors
call our envelop the laminate convex hull of $E$.

\noindent {\it {ii)}} We will use also the following
representation for {\textnormal{Rco}\,E} (see
\cite{implicitdacomarc}): $\textnormal{Rco}E=\bigcup\limits_{i \in
\N} \textnormal{R}_i \textnormal{co}E,$ where
$$\textnormal{R}_0 \textnormal{co}E=E, \quad \textnormal{R}_{i+1}
\textnormal{co}E=\{tA+(1-t)B: A,B \in \textnormal{R}_i
\textnormal{co}E, rk(A-B)\leq 1, t\in [0,1]\}\,.
$$

\noindent {\it {iii)}} If $E$ is bounded, then (see
\cite{implicitdacomarc})
$$
 \overline{\textnormal{Pco}\,E} = \{\xi \in
\R^{2\times 2}: f(\xi)\leq 0, \forall\,\, f:\R^{2\times 2}\to {\R}
\,\,\mbox{ polyconvex}\,\,, f|_{E}\leq 0 \}=
\textnormal{Pco}\,\overline{E}\,.
$$
This means that for a compact set, our notion of polyconvexity
coincides with that one of Cardaliaguet and Tahraoui in
\cite{card-tah}. We observe that the polyconvex hull of a compact
set $E$ is compact.

 \noindent {\it {iv)}} We recall that
${\textnormal{Rco}\,E}\subseteq {\textnormal{Pco}\,E}$ for every
set $E$.

\end{rem}
We now pass to the study of the polyconvex hull of our set $E$. We
recall the following result established in \cite{card-tah}:
\begin{prop}\label{policard}Let $K\subset T$  be a compact set and
$E=\{\xi \in \R^{2\times 2}: (\lambda_1(\xi),\lambda_2(\xi))\in
K\}\,.$ Let
\begin{equation}
\begin{array}{l}
\dys
\Sigma=\left\{(\theta,\gamma)\in \R^2_+: \gamma\geq
\theta^2\,\,\textnormal{et}\,\, \forall\,\,(x,y)\in K\,\, y\leq
\theta+\frac{\gamma-\theta^2}{x+\theta}\right\} \vspace{0.2cm}
\\\dys
\sigma(x)=\inf\limits_{(\theta,\gamma)\in
\Sigma}\theta+\frac{\gamma-\theta^2}{x+\theta},\,\, \forall
\,\,x\geq 0.
\end{array}
\label{sigmacard-ta}
\end{equation}
Then
$$
\textnormal{Pco}{E}=\{\xi \in \R^{2\times 2}:
\lambda_2(\xi)\leq\sigma(\lambda_1(\xi))\}\,.
$$
\end{prop}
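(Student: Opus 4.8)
The plan is to reduce the computation of $\textnormal{Pco}\,E$ to an ordinary convex hull in $\R^5$ and then to exploit the isotropy. Since $E$ is compact, Remark~\ref{rcoe}~(iii) identifies our notion of polyconvexity with the classical one, and the standard description of the polyconvex hull in $\R^{2\times2}$ (where $\det$ is the only nontrivial minor, see \cite{implicitdacomarc}) gives
\[
\textnormal{Pco}\,E=\{\xi\in\R^{2\times2}:(\xi,\det\xi)\in\textnormal{co}\,\hat E\},\qquad \hat E:=\{(\eta,\det\eta):\eta\in E\}\subset\R^{2\times2}\times\R .
\]
As $\hat E$ is compact, $\textnormal{co}\,\hat E$ is compact convex and $\textnormal{Pco}\,E$ is closed. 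The group $\mathcal{O}(2)\times\mathcal{O}(2)$ acts linearly on $\R^5$ by $(\eta,t)\mapsto(A\eta B,\det A\det B\,t)$; because $E$ is isotropic and $\det(A\eta B)=\det A\det B\det\eta$, this action preserves $\hat E$ and hence $\textnormal{co}\,\hat E$. Consequently $\textnormal{Pco}\,E$ is itself isotropic, so it is determined by its singular-value profile, and it suffices to show that $\textnormal{diag}(x,y)\in\textnormal{Pco}\,E$ if and only if $y\le\sigma(x)$, for every $0\le x\le y$.

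For the inclusion $\textnormal{Pco}\,E\subseteq\{\lambda_2(\xi)\le\sigma(\lambda_1(\xi))\}$ I would test against the polyaffine (hence polyconvex) functions $\eta\mapsto\langle\textnormal{diag}(-\theta,\theta),\eta\rangle+\det\eta-\gamma$ for $(\theta,\gamma)\in\Sigma$. A von Neumann--type optimization over the orbit of a fixed singular-value pair shows that $\sup_{\eta\in E}[\langle\textnormal{diag}(-\theta,\theta),\eta\rangle+\det\eta]=\sup_{(x,y)\in K}[xy+\theta(y-x)]$, which is $\le\gamma$ precisely because $(\theta,\gamma)\in\Sigma$; thus the test function is $\le0$ on $E$ and $\textnormal{Pco}\,E\subseteq\{\langle\textnormal{diag}(-\theta,\theta),\xi\rangle+\det\xi\le\gamma\}$. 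Evaluating this at the aligned element $\textnormal{diag}(\lambda_1(\xi),\lambda_2(\xi))\in\textnormal{Pco}\,E$ (legitimate by isotropy, and having determinant $\lambda_1\lambda_2\ge0$) gives $\theta(\lambda_2-\lambda_1)+\lambda_1\lambda_2\le\gamma$, i.e. $(\lambda_2-\theta)(\lambda_1+\theta)\le\gamma-\theta^2$, i.e. $\lambda_2\le\theta+\frac{\gamma-\theta^2}{\lambda_1+\theta}$. Taking the infimum over $(\theta,\gamma)\in\Sigma$ yields $\lambda_2(\xi)\le\sigma(\lambda_1(\xi))$.

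The reverse inclusion is the harder, constructive half. Given $(x_0,y_0)$ with $0\le x_0\le y_0\le\sigma(x_0)$ I must show $(\textnormal{diag}(x_0,y_0),x_0y_0)\in\textnormal{co}\,\hat E$. I would argue by separation: if this failed, a polyaffine function $\eta\mapsto\langle L,\eta\rangle+s\det\eta$ would strictly separate this point from the compact convex set $\textnormal{co}\,\hat E$, and I would then have to show that the constraint it imposes on the singular values, after the orbit optimization of $\sup_{\eta\in E}[\langle L,\eta\rangle+s\det\eta]$ for a \emph{general} pair $(L,s)$, is already implied by the hyperbola constraints defining $\Sigma$ together with $0\le\lambda_1\le\lambda_2$; this would force $y_0>\sigma(x_0)$, a contradiction. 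Equivalently, one realizes each boundary point $(x_0,\sigma(x_0))$ explicitly as a rank-one laminate built from the points of $K$ lying on the touching hyperbola, the equation $(\lambda_2-\theta)(\lambda_1+\theta)=\gamma-\theta^2$ being exactly the rank-one compatibility condition, the sign of the determinant being fixed by the usual four-matrix reflection trick.

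The main obstacle is precisely this last step: carrying out the orbit optimization of $\sup_{\eta\in E}[\langle L,\eta\rangle+s\det\eta]$ for arbitrary $(L,s)$ (equivalently, constructing the laminates) and verifying that the two-parameter family $\Sigma$ exhausts all the binding polyaffine constraints on the upper envelope of $\lambda_2$. I expect the clause $\gamma\ge\theta^2$ in the definition of $\Sigma$ to be exactly the bookkeeping that records the purely convex bounds $\lambda_2\le\theta$ (operator-norm constraints) arising when the touching hyperbola degenerates to its horizontal asymptote.
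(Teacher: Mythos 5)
Your attempt should first be measured against what the paper actually does here: Proposition \ref{policard} is not proved in the paper at all, it is recalled verbatim from Cardaliaguet and Tahraoui \cite{card-tah} (Remark \ref{rcoe} (iii) is included precisely to justify that the paper's notion of $\textnormal{Pco}$ matches theirs for compact sets). So any blind proof is a from-scratch re-derivation of a cited theorem. The first half of yours is essentially sound: the reduction $\textnormal{Pco}\,E=\{\xi:(\xi,\det\xi)\in\textnormal{co}\,\hat E\}$ for compact $E$, the isotropy of $\textnormal{Pco}\,E$ via the linear $\mathcal{O}(2)\times\mathcal{O}(2)$ action on $\R^{2\times2}\times\R$, and the testing against $\eta\mapsto\langle\textnormal{diag}(-\theta,\theta),\eta\rangle+\det\eta-\gamma$ do give $\textnormal{Pco}\,E\subseteq\{\lambda_2(\xi)\leq\sigma(\lambda_1(\xi))\}$. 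One computational slip: the orbit supremum is not $xy+\theta(y-x)$ but $\max\{xy+\theta(y-x),\,\theta(x+y)-xy\}$, the second branch being attained at $\textnormal{diag}(-x,y)$ and dominant when $y<\theta$. This happens to be harmless, since $y<\theta$ forces $x\leq y<\theta$ and then $\theta(x+y)-xy=\theta^2-(\theta-x)(\theta-y)\leq\theta^2\leq\gamma$, so the test function is still $\leq 0$ on $E$; but as written your equality is false and the argument needs this extra case.

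The genuine gap is the converse inclusion $\{\xi:\lambda_2(\xi)\leq\sigma(\lambda_1(\xi))\}\subseteq\textnormal{Pco}\,E$, which you never establish. Both routes you describe are statements of intent, not arguments: route (i) requires computing the support function of $\hat E$ for an \emph{arbitrary} pair $(L,s)$ (where signed singular values of $L$ enter and the two determinant components of each orbit must be handled separately) and then showing that every binding polyaffine constraint on diagonal matrices is already generated by the two-parameter family $L=\textnormal{diag}(-\theta,\theta)$, $s=1$, $(\theta,\gamma)\in\Sigma$ of \rife{sigmacard-ta}; route (ii) requires exhibiting, for each boundary point $(x_0,\sigma(x_0))$, an explicit convex combination in $(\xi,\det\xi)$-space supported on matrices whose singular values lie in $K$ on the touching hyperbola, including the verification that the infimum in \rife{sigmacard-ta} is attained by such a hyperbola through points of $K$. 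You yourself flag this as ``the main obstacle'', and it is precisely the substance of the theorem in \cite{card-tah}; your remark that $\gamma\geq\theta^2$ should encode the degenerate operator-norm constraints is a plausible guess, not a proof. As it stands the proposal proves the easy inclusion and defers the hard one to exactly the point where the cited work begins, so it is not a complete proof; the honest options are to cite \cite{card-tah} as the paper does, or to actually carry out the support-function computation or the laminate construction.
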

It will be useful for our purposes to write in a different way the
function $\sigma$ defined by \rife{sigmacard-ta}. We will use the
notation $m(\theta)=\max\limits_ {(a,b) \in K}
\{ab+\theta(b-a)\}\,$ throughout all this article.
\begin{prop}\label{prelim}
Let $K\subset T$ be a compact set satisfying
\rife{ipotesidipositivitadeipuntidiK} and $\sigma$ be the function
defined by \rife{sigmacard-ta}. Then
\begin{equation}
\sigma(x)= \min\limits_{\theta \in [0,\max\limits_ {(a,b) \in K}
b]} \frac{\theta x+m(\theta)}{x+\theta}. \label{sigmagise}
\end{equation}
\end{prop}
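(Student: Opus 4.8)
The plan is to reduce the infimum over the two-dimensional set $\Sigma$ to a one-dimensional minimization by first eliminating $\gamma$. The starting point is to rewrite the defining inequality of $\Sigma$. Since $a\ge\min\{x:(x,y)\in K\}>0$ by \rife{ipotesidipositivitadeipuntidiK} and $\theta\ge0$, the denominator $a+\theta$ is strictly positive, so for $(a,b)\in K$ the condition $b\le\theta+\frac{\gamma-\theta^2}{a+\theta}$ is equivalent, after multiplying through by $a+\theta$, to $ab+\theta(b-a)\le\gamma$. Requiring this for every $(a,b)\in K$ is therefore the single constraint $\gamma\ge m(\theta)$. Combined with $\gamma\ge\theta^2$, this shows
$$\Sigma=\{(\theta,\gamma)\in\R^2_+:\gamma\ge\max(\theta^2,m(\theta))\}.$$

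Next, for fixed $\theta$ the quantity $\theta+\frac{\gamma-\theta^2}{x+\theta}$ is nondecreasing in $\gamma$ (again because $x+\theta>0$), so the infimum over the admissible $\gamma$ is attained at the smallest one, namely $\gamma=\max(\theta^2,m(\theta))$. When $m(\theta)\ge\theta^2$ this optimal value is $\gamma=m(\theta)$, and a direct simplification gives
$$\theta+\frac{m(\theta)-\theta^2}{x+\theta}=\frac{\theta x+m(\theta)}{x+\theta},$$
which is exactly the expression appearing in \rife{sigmagise}. It thus remains to control the range of $\theta$ over which we minimize, and in particular to dispose of the regime $m(\theta)<\theta^2$.

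To locate that regime I would use the identity $ab+\theta(b-a)-\theta^2=(a+\theta)(b-\theta)$, which yields $m(\theta)-\theta^2=\max_{(a,b)\in K}(a+\theta)(b-\theta)$. Since $a+\theta>0$, the sign of each factor is that of $b-\theta$, so $m(\theta)\ge\theta^2$ holds precisely when $\theta\le\max_{(a,b)\in K}b$, and $m(\theta)<\theta^2$ for larger $\theta$. In the latter case the optimal $\gamma$ equals $\theta^2$ and the objective reduces to $\theta$; as $\theta>\max_{(a,b)\in K}b$ there, these values are all strictly larger than the value attained at $\theta=\max_{(a,b)\in K}b$ (where the two formulas agree, since $m(\max b)=(\max b)^2$ gives $\frac{\theta x+m(\theta)}{x+\theta}=\max b$). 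Hence the infimum is not lowered by allowing $\theta>\max b$, and we may restrict to $\theta\in[0,\max b]$, on which $\sigma(x)=\inf_{\theta\in[0,\max b]}\frac{\theta x+m(\theta)}{x+\theta}$.

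Finally, I would upgrade this infimum to a minimum: $m$ is continuous, being a maximum of the jointly continuous family $ab+\theta(b-a)$ over the compact set $K$, so $\theta\mapsto\frac{\theta x+m(\theta)}{x+\theta}$ is continuous on the compact interval $[0,\max b]$ and attains its minimum there. The main obstacle is not any single deep step but the careful bookkeeping with the two constraints $\gamma\ge\theta^2$ and $\gamma\ge m(\theta)$: one must correctly identify where each is active, verify that the boundary value $\theta=\max b$ glues the two regimes together, and check that the discarded regime $\theta>\max b$ really cannot beat the interior minimum.
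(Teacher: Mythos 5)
Your proof follows essentially the same route as the paper's: the same reduction of $\Sigma$ to $\{(\theta,\gamma)\in\R^2_+:\gamma\ge\max(\theta^2,m(\theta))\}$, the same monotonicity-in-$\gamma$ reduction to the lower envelope, and the same gluing of the two regimes at $\theta=\max_{(a,b)\in K}b$ via $m(\max_{(a,b)\in K}b)=[\max_{(a,b)\in K}b]^2$, which is the paper's identity \rife{uguale}. Your factorization $ab+\theta(b-a)-\theta^2=(a+\theta)(b-\theta)$ is in fact a cleaner way to obtain the paper's Step 1 trichotomy, which the paper proves by separate estimates in the three cases $\theta>\max b$, $\theta<\max b$, $\theta=\max b$. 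Up to that point the argument is correct.

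The genuine gap is in your last step, where you claim that $\theta\mapsto\frac{\theta x+m(\theta)}{x+\theta}$ is continuous on the whole compact interval $[0,\max_{(a,b)\in K}b]$ and hence attains its minimum. This is true for $x>0$ (the denominator is bounded below by $x$), but it fails at $x=0$: there the function is $\frac{m(\theta)}{\theta}$, and at $\theta=0$ the denominator vanishes while $m(0)=\max_{(a,b)\in K}ab>0$ by hypothesis \rife{ipotesidipositivitadeipuntidiK}, so the function blows up and is not even defined at the left endpoint; the compactness-plus-continuity argument does not apply as stated. This is precisely the case to which the paper devotes the final part of its proof: one observes that $\frac{m(\theta)}{\theta}\ge\frac{\max_{(a,b)\in K}ab}{\theta}\to+\infty$ as $\theta\to 0^+$, so there exists $\bar\varepsilon>0$ such that the infimum over $[0,\max_{(a,b)\in K}b]$ coincides with the infimum over $[\bar\varepsilon,\max_{(a,b)\in K}b]$, where continuity does yield a minimum; this is \rife{thetadiversodazero}, which the paper moreover reuses in the proof of Proposition \ref{interno}. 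The repair is short, but as written your attainment claim is false at $x=0$, which is exactly the point where the positivity hypothesis on $K$ is needed a second time.
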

\begin{proof}
We divide the proof into two steps: in the first one we study the
set $\Sigma$ defined in proposition \ref{policard}, and in the
second one we state the formula for the function $\sigma$.

\noindent {\it{Step 1: Study of the set $\Sigma$:}} By definition
$$
\Sigma=\left\{(\theta,\gamma)\in \R^2_+: \gamma\geq
\theta^2\,\,\textnormal{et}\,\, \forall\,\,(a,b)\in K\,\, b\leq
\theta+\frac{\gamma-\theta^2}{a+\theta}\right\}\,.
$$
As for every $(a,b) \in K$ we have $a>0\,,$ thanks to
(\ref{ipotesidipositivitadeipuntidiK})
$$\Sigma=\left\{(\theta,\gamma)\in \R^2_+:
\gamma\geq\sup\{\theta^2,\max\limits_{(a,b) \in
K}\{ab+\theta(b-a)\}\}\right\} \,.
$$
We observe that if $\theta\geq \max\limits_{(a,b)\in K}b$ then for
every $(a,b)\in K$ one has
$$
ab+\theta(b-a)\leq \theta a+\theta(b-a)=\theta b\leq \theta^2
$$
and so $\max\limits_{(a,b) \in K}\{ab+\theta(b-a)\}\leq \theta^2$.

\noindent If $\theta< \max\limits_{(a,b)\in K}b$ then, considering
$(a,\max\limits_{(a,b)\in K}b) \in K$ we have
$$
\max\limits_{(a,b) \in K}\{ab+\theta(b-a)\}\geq
a(\max\limits_{(a,b)\in K}b-\theta)+\theta \max\limits_{(a,b)\in
K}b> \theta \max\limits_{(a,b)\in K}b> \theta^2\,.
$$
If $\theta=\max\limits_{(a,b)\in K}b$ we have
\begin{equation}\max\limits_{(a,b) \in K}\{ab+[\max\limits_{(a,b)\in
K}b](b-a)\}=[\max\limits_{(a,b)\in K}b]^2. \label{uguale}
\end{equation}

From this study we can infer that
$$\Sigma=\Sigma_1\cup \Sigma_2,$$ where
\[
\begin{array}{l}
\Sigma_1=\{(\theta,\gamma)\in \R^2: \theta \in
[0,\max\limits_{(a,b)\in K}b], \,\,\gamma\geq
\max\limits_{(a,b)\in K} \{ab+\theta(b-a)\}\}
\\
\Sigma_2=\{(\theta,\gamma)\in \R^2: \theta \geq
\max\limits_{(a,b)\in K}b, \,\,\gamma\geq \theta^2 \} \,.
\end{array}
\]

 \noindent
{\it {Step 2: Study of the function $\sigma$:}} We define
$$
g_x(\theta,\gamma)=\frac{\theta x+
\gamma}{x+\theta}\,\,\,\text{for}\,\,x\geq 0,
$$ and
\[
\begin{array}{l}
\Gamma_1=\{(\theta,\max\limits_{(a,b)\in K}\{ab+\theta(b-a)\}),
\theta \in [0,\max\limits_{(a,b)\in K}b]\} \\
\Gamma_2=\{(\theta,\theta^2), \theta\geq \max\limits_{(a,b)\in
K}b\}\,.
\end{array}
\]
We observe that $\Gamma_1\cup \Gamma_2\subset \Sigma_1\cup
\Sigma_2\,.$ We are going to show that
$$\sigma(x)=\inf\limits_{\Gamma_1\cup
\Gamma_2}g_x(\theta,\gamma).$$ We know from proposition
\ref{policard} that $\dys\sigma(x)=\inf\limits_{(\theta,\gamma)\in
\Sigma}\theta+\frac{\gamma-\theta^2}{x+\theta}$. Now, if $x=0$
$$
\sigma(0)=\inf\limits_{(\theta,\gamma)\in
\Sigma}\theta+\frac{\gamma-\theta^2}{\theta}=\inf\limits_{(\theta,\gamma)\in
\Sigma}\frac{\gamma}{\theta}\,.
$$
 As ($\theta_0$ fixed)
the function $\dys\frac{\gamma}{\theta_0}$ is increasing,
$$
\sigma(0)=\inf\limits_{(\theta,\gamma)\in
\Sigma}\frac{\gamma}{\theta}=\inf\limits_{\Gamma_1\cup
\Gamma_2}g_0(\theta,\gamma).
$$
If $x>0$, $\dys\sigma(x)=\inf\limits_{(\theta,\gamma)\in
\Sigma}\theta+\frac{\gamma-\theta^2}{x+\theta}=\dys\inf\limits_{(\theta,\gamma)\in
\Sigma}g_x(\theta,\gamma)\,.$ We observe that $\dys\frac{\partial
g_x}{\partial \gamma}>0$: this implies that, if $\theta_0\geq 0$
is fixed, $g_x(\theta_0,\gamma)$ is increasing (in $\gamma$), and
so for $x>0$
$$
\sigma(x)=\inf\limits_{\Sigma}g_x(\theta,\gamma)=
\inf\limits_{\Gamma_1\cup\Gamma_2}g_x(\theta,\gamma)\,,
$$
as wished. We observe that
$$g_x\mid_{\Gamma_1}=\frac{\theta x+\max\limits_{(a,b) \in K}\{ab+\theta
(b-a)\}}{x+\theta},
$$
and so
$$
\inf\limits_{\Gamma_1}g_x=\inf\limits_{\theta \in [0,\max\limits_
{(a,b) \in K} b]} \frac{\theta x+\max\limits_{(a,b) \in
K}\{ab+\theta (b-a)\}}{x+\theta}\,.$$ Moreover
$$g_x\mid_{\Gamma_2}=\frac{\theta x+\theta^2}{x+\theta}=\theta,\,\,
\theta\geq \max\limits_{(a,b)\in K}b\,:
$$
consequently $$ \inf\limits_{\Gamma_2}g_x=\max\limits_{(a,b)\in
K}b.$$ This study implies that
$$\sigma(x)= \inf\limits_{\Gamma_1\cup\Gamma_2}g_x(\theta,\gamma)
=\inf\left\{\inf\limits_{\theta \in [0,\max\limits_ {(a,b) \in K}
b]} \frac{\theta x+\max\limits_{(a,b) \in K}\{ab+\theta
(b-a)\}}{x+\theta}, \max\limits_{(a,b)\in K}b\right\}\,.
$$
Due to (\ref{uguale})
$$\frac{ x\max\limits_{(a,b)\in K}b +\max\limits_{(a,b) \in K}
\{ab+[\max\limits_{(a,b)\in K}b] (b-a)\}}{x+\max\limits_{(a,b)\in
K}b}=\max\limits_{(a,b)\in K}b;$$ therefore one has
$$\sigma(x)= \inf\limits_{\theta \in [0,\max\limits_
{(a,b) \in K} b]} \frac{\theta x+\max\limits_{(a,b) \in
K}\{ab+\theta (b-a)\}}{x+\theta}=\inf\limits_{\theta \in
[0,\max\limits_ {(a,b) \in K} b]} \frac{\theta
x+m(\theta)}{x+\theta}.
$$
We are going to show that $\forall\,\,x\geq 0$
$$
\inf\limits_{\theta \in [0,\max\limits_ {(a,b) \in K} b]}
\frac{\theta x+m(\theta)}{x+\theta}=\min\limits_{\theta \in
[0,\max\limits_ {(a,b) \in K} b]} \frac{\theta
x+m(\theta)}{x+\theta}\,,
$$
and so the formula of the statement. For $x>0$ this is trivial.
For $x=0$ we have to study the function of $\theta$
$$
\frac{m(\theta)}{\theta}=\frac{\max\limits_{(a,b) \in
K}\{ab+\theta (b-a)\}}{\theta}:
$$
we observe that,
$$
\frac{\max\limits_{(a,b) \in K}\{ab+\theta(b-a)\}}{\theta}\geq
\frac{\max\limits_{(a,b) \in K}{ab}}{\theta}\to \infty, \theta\to
0.
$$
This implies that there exists  $\bar{\varepsilon}>0$ such that
\begin{equation}
\inf\limits_{\theta \in [0,\max\limits_ {(a,b) \in K}
b]}\frac{m(\theta)}{\theta}=\inf\limits_{\theta \in
[\bar{\varepsilon},\max\limits_ {(a,b) \in K}
b]}\frac{m(\theta)}{\theta}=\min\limits_{\theta \in
[\bar{\varepsilon},\max\limits_ {(a,b) \in K}
b]}\frac{m(\theta)}{\theta}\,, \label{thetadiversodazero}
\end{equation}
and so we have the result.
 \end{proof}
% Combining the two previous propositions we have
%\begin{cor}\label{corollarypcoe}
%Let $K\subset T$ be a compact set satisfying
%\rife{ipotesidipositivitadeipuntidiK}. Let $ E=\{\xi \in
%\R^{2\times 2}: (\lambda_1(\xi), \lambda_2(\xi))\in K \}\,. $
%Then, if $\sigma$ is the function defined by (\ref{sigmagise})
%$$
%{\textnormal{Pco}E}=\{\xi \in \R^{2\times 2}: \lambda_2(\xi)\leq
%\sigma(\lambda_1(\xi))\}\,.
%$$
%\end{cor}
In the next proposition we study the interior of
$\textnormal{Pco}\,E\,.$
\begin{prop}\label{interno}
Let $K\subset T$ be a compact set satisfying
\rife{ipotesidipositivitadeipuntidiK}. Let $ E=\{\xi \in
\R^{2\times 2}: (\lambda_1(\xi), \lambda_2(\xi))\in K \}\,. $
Then, if $\sigma$ is the function defined by (\ref{sigmagise}),
$$
\textnormal{int}\, {\textnormal{Pco}E}=\{\xi \in \R^{2\times 2}:
\lambda_2(\xi)<\sigma(\lambda_1(\xi))\}\,.
$$
\end{prop}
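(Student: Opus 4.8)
The plan is to prove the two inclusions separately, using Proposition~\ref{policard}, which tells us that $\textnormal{Pco}\,E=\{\xi: \lambda_2(\xi)\leq\sigma(\lambda_1(\xi))\}$, together with the explicit formula~\rife{sigmagise} for $\sigma$ obtained in Proposition~\ref{prelim}. Since $\textnormal{Pco}\,E$ is already characterized by the inequality $\lambda_2(\xi)\leq\sigma(\lambda_1(\xi))$, the candidate for its interior is the set where the \emph{strict} inequality holds. The map $\xi\mapsto(\lambda_1(\xi),\lambda_2(\xi))$ is continuous, and I would first record that $\sigma$ is continuous (indeed Lipschitz) on $[0,\infty)$: each function $x\mapsto\frac{\theta x+m(\theta)}{x+\theta}$ appearing in~\rife{sigmagise} is continuous in $x$, and being a minimum over the compact parameter interval $\theta\in[0,\max_{(a,b)\in K}b]$ of a family that is equi-continuous in $x$, the minimum $\sigma$ inherits continuity.

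\medskip

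For the inclusion $\{\xi:\lambda_2(\xi)<\sigma(\lambda_1(\xi))\}\subseteq\textnormal{int}\,\textnormal{Pco}\,E$, I would fix $\xi_0$ with $\lambda_2(\xi_0)<\sigma(\lambda_1(\xi_0))$ and use the continuity of $\xi\mapsto\lambda_2(\xi)-\sigma(\lambda_1(\xi))$ to find a neighbourhood of $\xi_0$ on which this difference stays negative; every $\xi$ in that neighbourhood then satisfies $\lambda_2(\xi)\leq\sigma(\lambda_1(\xi))$, hence lies in $\textnormal{Pco}\,E$, so $\xi_0$ is interior. Here the strict inequality defining the set is exactly what opens up room for a neighbourhood, so this direction is essentially a continuity argument once the continuity of $\sigma$ is in hand.

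\medskip

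The reverse inclusion $\textnormal{int}\,\textnormal{Pco}\,E\subseteq\{\xi:\lambda_2(\xi)<\sigma(\lambda_1(\xi))\}$ is where the real work lies, and I expect this to be the main obstacle. Equivalently I must show that no boundary point, i.e. no $\xi_0$ with $\lambda_2(\xi_0)=\sigma(\lambda_1(\xi_0))$, can be interior to $\textnormal{Pco}\,E$. The idea is to exhibit, arbitrarily close to such a $\xi_0$, matrices $\xi$ for which $\lambda_2(\xi)>\sigma(\lambda_1(\xi))$, so that $\xi_0\notin\textnormal{int}\,\textnormal{Pco}\,E$. Since the singular values can be prescribed by diagonal matrices (up to orthogonal conjugation, which leaves $\textnormal{Pco}\,E$ invariant by isotropy), it suffices to work in the $(\lambda_1,\lambda_2)$-plane: I would perturb $(\lambda_1(\xi_0),\lambda_2(\xi_0))$ to a nearby point lying strictly above the graph of $\sigma$. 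The delicate case is when $\lambda_1(\xi_0)$ sits at a point where $\sigma$ is not differentiable or where a small change in $\lambda_1$ forces $\sigma$ to drop; I would handle this by noting that $\sigma$ is nonincreasing in a controlled way (a consequence of the structure of~\rife{sigmagise}, where increasing $x$ decreases each quotient whose value exceeds $\theta$), so that moving $\lambda_1$ slightly and keeping $\lambda_2$ fixed at the boundary value already violates the inequality. A remaining subtlety is that admissible singular values must respect $\lambda_1\leq\lambda_2$ and the positivity~\rife{ipotesidipositivitadeipuntidiK}; since $\sigma(\lambda_1(\xi_0))=\lambda_2(\xi_0)\geq\lambda_1(\xi_0)>0$, one has room to carry out these perturbations within the admissible region $T$, which closes the argument.
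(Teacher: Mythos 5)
There are two genuine gaps, one in each inclusion. For the inclusion $\{\lambda_2<\sigma(\lambda_1)\}\subseteq\textnormal{int}\,\textnormal{Pco}\,E$ your continuity argument for $\sigma$ fails exactly where continuity is the issue. The family $g_\theta(x)=\frac{\theta x+m(\theta)}{x+\theta}$, $\theta\in[0,\max_{(a,b)\in K}b]$, is \emph{not} equicontinuous near $x=0$: since $m(\theta)\geq\max_{(a,b)\in K}ab>0$ by \rife{ipotesidipositivitadeipuntidiK}, one has $g_\theta(x)\to+\infty$ as $(x,\theta)\to(0,0)$, and $g_0$ is not even finite at $x=0$. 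Your argument does establish continuity on $(0,\infty)$ (there the map $(x,\theta)\mapsto g_\theta(x)$ is jointly continuous on $(0,\infty)\times[0,\max b]$), but the point $x=0$ cannot be skipped: matrices with $\lambda_1(\xi)=0$ (e.g.\ $\xi=0$) do lie in $\{\lambda_2<\sigma(\lambda_1)\}$, and since $\sigma$ is nonincreasing the danger is precisely a downward jump of $\sigma$ as $x\to 0^+$, which would destroy openness at such matrices. The paper's entire Step 1 is devoted to this: writing $\sigma(x_n)=g_{\theta_n}(x_n)$ for minimizers $\theta_n$, it shows $\liminf\theta_n>0$ — if $\theta_{n_k}\to 0$ then $\sigma(x_{n_k})\geq\frac{\max_{(a,b)\in K}ab}{x_{n_k}+\theta_{n_k}}\to\infty$, and the matrices $\mathrm{diag}(x_{n_k},\sigma(x_{n_k}))\in\textnormal{Pco}\,E$ would have $\lambda_2\to\infty$, contradicting the boundedness of $\textnormal{Pco}\,E$ (recall $\lambda_2$ is a norm) — and only then deduces $\sigma(x_n)\to\sigma(0)$ by a two-sided estimate. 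Some argument of this kind is indispensable; the equicontinuity claim as stated is false.

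For the reverse inclusion your opening idea — exhibit, near a boundary matrix, points strictly above the graph of $\sigma$ — is exactly the paper's: with $A\eta B=\mathrm{diag}(\lambda_1(\eta),\lambda_2(\eta))$, $A,B\in\mathcal O(2)$, set $D=A^{-1}\mathrm{diag}(0,d)B^{-1}$ with $0<d<\varepsilon$; then $\lambda_1(\eta+D)=\lambda_1(\eta)$ and $\lambda_2(\eta+D)=\lambda_2(\eta)+d>\sigma(\lambda_1(\eta+D))$, contradicting $B_\varepsilon(\eta)\subseteq\textnormal{Pco}\,E$. However, your proposed treatment of the ``delicate case'' is broken: $\sigma$ is only \emph{weakly} decreasing and is genuinely locally constant in general (for $K=\{(a,b)\}$ one has $\sigma\equiv b$ on $[0,a]$), so moving $\lambda_1$ slightly while keeping $\lambda_2$ fixed at the boundary value gives $\lambda_2=\sigma(\lambda_1)$ again — you remain in $\textnormal{Pco}\,E$ and obtain no contradiction. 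The fix is that no delicate case exists: increasing $\lambda_2$ with $\lambda_1$ fixed, as above, violates the defining inequality strictly regardless of any regularity or strict monotonicity of $\sigma$, and the perturbation $D$ has norm $d$, so it stays in the $\varepsilon$-ball.
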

\begin{proof} The proof is divided into two steps.

\noindent {\it{Step 1)}} To show that $\textnormal{int}\,
{\textnormal{Pco}E}\supseteq\{\xi \in \R^{2\times 2}:
\lambda_2(\xi)<\sigma(\lambda_1(\xi))\}\,,$ as $\lambda_i(\xi)$
are continuous functions, $i=1,2$, it is sufficient to show that
the function $\sigma$ is continuous for $x\geq 0$. This is easy
for $x>0$. To study the point $x=0$, we are going to show that if
$x_n\to 0^+$, then $\sigma(x_n)\to \sigma(0)$. We can say from
(\ref{thetadiversodazero}) that there exists $\theta_0\in
(0,\max\limits_{(a,b)\in K}b]$ such that
$\sigma(0)=\frac{m(\theta_0)}{\theta_0}\,.$ Then, by definition of
$\sigma(x_n)$
$$
\begin{array}{l}
\dys\sigma(x_n)-\sigma(0)\leq \dys\frac{\theta_0
x_n+m(\theta_0)}{x_n+\theta_0}-\frac{m(\theta_0)}{\theta_0}\to 0,
n\to \infty\,.
\end{array}
$$
We are going to show that $\sigma(x_n)-\sigma(0)\geq h(x_n),$ with
$h(x_n)\to 0$, if $n\to \infty$: this will imply that
$\sigma(x_n)\to \sigma(0)\,,$ and so the continuity of $\sigma$ in
$0$.

For every $x_n$ there exists $\theta_n \in
[0,\max\limits_{(a,b)\in K}b]$ such that
$$
\sigma(x_n)=\frac{\theta_n x_n+m(\theta_n)}{x_n+\theta_n}\,.
$$Then, by definition of
$\sigma(0)$
$$
\begin{array}{l}
\dys\sigma(x_n)-\sigma(0)=\frac{\theta_n
x_n+m(\theta_n)}{x_n+\theta_n} -\min\limits_{\theta \in
[0,\max\limits_ {(a,b) \in K} b]} \frac{ m(\theta)}{\theta}
\vspace{0.2cm}\\
\phantom{\dys\sigma(x_n)-\sigma(0)}\geq \dys\frac{\theta_n
x_n+m(\theta_n)}{x_n+\theta_n}-\frac{m(\theta_n)}{\theta_n}=
\frac{x_n}{\theta_n(x_n+\theta_n)}[\theta_n^2-m(\theta_n)]\,.
\end{array}
$$
We will show that
$h(x_n)=\frac{x_n}{\theta_n(x_n+\theta_n)}[\theta_n^2-m(\theta_n)]\to
0, n\to \infty\,.$ As $\theta_n^2-m(\theta_n)$ is bounded, it is
now sufficient to prove that
$$
\frac{x_n}{\theta_n(x_n+\theta_n)}\to 0, n\to \infty\,.
$$
We observe that $\liminf\limits_{n\to \infty}\theta_n>0\,.$ In
fact, if $\liminf\limits_{n\to \infty}\theta_n=0\,,$ then there
exists a sub-sequence ${n_k}$ such that $\lim\limits_{k\to
\infty}\theta_{n_k}=0$; consequently
$$
\sigma(x_{n_k})=\frac{\theta_{n_k}
x_{n_k}+m(\theta_{n_k})}{x_{n_k}+\theta_{n_k}}\geq
\frac{\max\limits_{(a,b)\in K}ab}{x_{n_k}+\theta_{n_k}}\to \infty,
k\to \infty\,.
$$
The matrix
$$
\xi_{n_k}=\matricesimmetrica{x_{n_k}}{0}{\sigma(x_{n_k})}
$$ belongs to $\textnormal{Pco}\,E$ and
 $\lambda_2(\xi_{n_k})\to \infty\,:$ recalling
 that $\lambda_2$ is a norm over
 $\R^{2\times 2}$ (see \cite{libromatrici}),
we got  a contradiction because $\textnormal{Pco}E$ is bounded, as
$E$ is bounded.

Then $\dys\liminf\limits_{n\to \infty}\theta_n=a>0\,:$ this
implies that, for $n$ sufficiently large $\theta_n\geq \frac a2$
and so
$$
\frac{x_n}{\theta_n(x_n+\theta_n)}\leq \frac{x_n}{\frac
a2(x_n+\frac a2)} \to 0, n\to \infty\,,
$$
that is the result.

 \noindent {\it{Step 2)}} We now show that
$ \textnormal{int}\,{\textnormal{Pco}}\,{E}\subseteq\{\xi \in
\R^{2\times 2}: \lambda_2(\xi)<\sigma(\lambda_1(\xi))\}\,. $
Suppose that there exists a matrix $\eta \in
\textnormal{int}\,{\textnormal{Pco}}\,{E}$ such that
$\lambda_2(\eta)=\sigma(\lambda_1(\eta))$; therefore the ball
$B_{\varepsilon}(\eta)\subseteq \textnormal{Pco}\,{E}$, for some
$\varepsilon>0$. Let $A,B \in \mathcal{O}(2)$ be such that
$$
A\eta B=\matricesimmetrica{\lambda_1(\eta)}{0}{\lambda_2(\eta)};
$$
we define
$$
D=A^{-1}\matricesimmetrica{0}{0}{d}B^{-1},
$$
with $0<d<\varepsilon$. Then we have
\[
\begin{array}{l}
\lambda_1(\eta+D)=\lambda_1(A\eta B+ADB)=\lambda_1(\eta)
\\
\lambda_2(\eta+D)=\lambda_2(A\eta B+ADB)=\lambda_2(\eta)+d\,.
\end{array}
\]
The matrix $\eta+D \in B_{\varepsilon}(\eta)\subseteq
{\textnormal{Pco}}\,{E}$, as $d<\varepsilon$: this implies that
$\lambda_2(\eta+D)\leq \sigma(\lambda_1(\eta+D)).$ From other
hand,
$$
\lambda_2(\eta+D)=\lambda_2(\eta)+d=\sigma(\lambda_1(\eta))+d=
\sigma(\lambda_1(\eta+D))+d>\sigma(\lambda_1(\eta+D))\,,
$$
and this is a contradiction: therefore
$\lambda_2(\eta)<\sigma(\lambda_1(\eta))\,.$
\end{proof}
\begin{rem}\label{bord}The previous results imply that if
$\xi\in
\partial \textnormal{Pco}\,E,$ then there exists $\thetabar \in
[0,\max\limits_{(a,b)\in K}b]$ such that
$$
\lambda_1(\xi)\lambda_2(\xi)+\overline{\theta}(\lambda_2(\xi)-\lambda_1(\xi))
=m(\thetabar)\,.
$$
\end{rem}
\section{The rank one convex hull}
In this section we are going to prove the representation theorem
of the rank one convex hull of $E$. It will be useful to show our
existence result. We recall that we use the notation
$m(\theta)=\max\limits_{(a,b)\in K}\{ab+\theta(b-a)\}\,.$
\begin{thm}\label{rcoegise}
Let $K\subset T$ be a compact set satisfying
\rife{ipotesidipositivitadeipuntidiK}. Let
$$
E=\{\xi \in \R^{2\times 2}: (\lambda_1(\xi), \lambda_2(\xi))\in K
\}\,.
$$
Then
\[
\begin{array}{cll}
\textnormal{Rco}E&\!\!\!\!\!=&\!\!\!\!\!\left\{\xi \in \R^{2\times
2}:
\lambda_1(\xi)\lambda_2(\xi)+\theta(\lambda_2(\xi)-\lambda_1(\xi))\leq
m(\theta), \forall\,\,\theta \in [0,\max\limits_{(a,b)\in
K}b]\right\}, \vspace{0.2cm}
\\
\textnormal{int\,Rco}E&\!\!\!\!\!=&\!\!\!\!\!\left\{\xi \in
\R^{2\times 2}:
\lambda_1(\xi)\lambda_2(\xi)+\theta(\lambda_2(\xi)-\lambda_1(\xi))<
m(\theta), \forall\,\,\theta \in [0,\max\limits_{(a,b)\in
K}b]\right\}.
\end{array}
\]
\end{thm}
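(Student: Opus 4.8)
The plan is to begin by recognizing the two right-hand sides as $\textnormal{Pco}\,E$ and its interior. For a fixed $\xi$ the inequality $\lambda_1(\xi)\lambda_2(\xi)+\theta(\lambda_2(\xi)-\lambda_1(\xi))\le m(\theta)$ is the same as $\lambda_2(\xi)(\lambda_1(\xi)+\theta)\le \theta\lambda_1(\xi)+m(\theta)$, that is $\lambda_2(\xi)\le\frac{\theta\lambda_1(\xi)+m(\theta)}{\lambda_1(\xi)+\theta}$; requiring this for every $\theta\in[0,\max_{(a,b)\in K}b]$ is, by Proposition \ref{prelim}, exactly $\lambda_2(\xi)\le\sigma(\lambda_1(\xi))$, so by Proposition \ref{policard} the first right-hand side equals $\textnormal{Pco}\,E$. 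Since the minimum defining $\sigma$ is attained, the strict family of inequalities is equivalent to $\lambda_2(\xi)<\sigma(\lambda_1(\xi))$, which is $\textnormal{int}\,\textnormal{Pco}\,E$ by Proposition \ref{interno}. Thus the theorem reduces to proving $\textnormal{Rco}\,E=\textnormal{Pco}\,E$, the second equality $\textnormal{int}\,\textnormal{Rco}\,E=\textnormal{int}\,\textnormal{Pco}\,E$ following at once. One inclusion, $\textnormal{Rco}\,E\subseteq\textnormal{Pco}\,E$, is free by Remark \ref{rcoe} (iv)), so the whole content is the opposite inclusion $\textnormal{Pco}\,E\subseteq\textnormal{Rco}\,E$.

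To establish it I would first reduce to diagonal matrices. Both hulls are isotropic: for $\textnormal{Rco}\,E$ this follows from $\textnormal{Rco}\,E=\bigcup_i\textnormal{R}_i\textnormal{co}\,E$ (Remark \ref{rcoe} (ii)), because $A(tM+(1-t)N)B=t(AMB)+(1-t)(ANB)$ and $\textnormal{rk}(A(M-N)B)=\textnormal{rk}(M-N)$ for $A,B\in\mathcal{O}(2)$, so each $\textnormal{R}_i\textnormal{co}\,E$ is isotropic. Writing $\xi\in\textnormal{Pco}\,E$ in singular value form $\xi=U\,\textnormal{diag}(\lambda_1,\lambda_2)\,V^{t}$ with $U,V\in\mathcal{O}(2)$, so that $U^{t}\xi V=\textnormal{diag}(\lambda_1,\lambda_2)$, it then suffices to show $\textnormal{diag}(\lambda_1,\lambda_2)\in\textnormal{Rco}\,E$ whenever $0<\lambda_1\le\lambda_2\le\sigma(\lambda_1)$.

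For the lamination I would exploit two elementary rank-one moves on diagonal matrices. First, a sign move: $\textnormal{diag}(c,\beta)=t\,\textnormal{diag}(\alpha,\beta)+(1-t)\,\textnormal{diag}(-\alpha,\beta)$ with $t=\frac{c+\alpha}{2\alpha}$, where the difference has rank one and both endpoints share the singular pair $(\alpha,\beta)$; this lowers $\lambda_1$ from $\alpha$ to $|c|$ at fixed $\lambda_2=\beta$. Second, an in-orbit move: since $\det$ is affine along rank-one segments, two matrices with a common singular pair and common determinant can be rank-one connected, and the conformal–anticonformal decomposition, in which $|\xi^+|=\frac{\lambda_1+\lambda_2}{2}$ and $|\xi^-|=\frac{\lambda_2-\lambda_1}{2}$, shows that the intermediate points have strictly smaller $\lambda_1+\lambda_2$ while preserving the product $\lambda_1\lambda_2$. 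Using the supporting parameter $\bar\theta$ and the maximizing point of $K$ furnished by Remark \ref{bord}, one starts from points of $E$ and combines these moves to represent the diagonal target as a finite lamination.

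The main obstacle is exactly this construction for a multi-point $K$. In the single-point case of Dacorogna–Marcellini the hull is $\{\lambda_1\lambda_2\le ab,\ \lambda_2\le b\}$ and the two moves above suffice; but for a general $K$ no single $(a,b)$ dominates, since $m(\theta)=\max_{(a,b)\in K}\{ab+\theta(b-a)\}$ may be realized by different points for different $\theta$, and $\lambda_1(\xi)\lambda_2(\xi)+\theta(\lambda_2(\xi)-\lambda_1(\xi))\le m(\theta)$ does not force $\xi$ into the single-point hull of any one $(a,b)\in K$. Hence the lamination must genuinely mix several points of $K$, and the delicate task is to orchestrate the moves so that the process terminates after finitely many steps and so that the boundary arcs $\{\lambda_1(\xi)\lambda_2(\xi)+\bar\theta(\lambda_2(\xi)-\lambda_1(\xi))=m(\bar\theta)\}$ of Remark \ref{bord} are themselves reached by finite laminations. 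This last point is what forces $\textnormal{Rco}\,E$ to be closed and to coincide with $\textnormal{Pco}\,E$, in contrast with the general warning of Remark \ref{rcoe} (i)).
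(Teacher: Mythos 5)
Your reduction is sound as far as it goes: identifying the two right-hand sides with $\textnormal{Pco}\,E$ and $\textnormal{int}\,\textnormal{Pco}\,E$ via Propositions \ref{prelim}, \ref{policard} and \ref{interno}, invoking Remark \ref{rcoe} (iv) for $\textnormal{Rco}\,E\subseteq\textnormal{Pco}\,E$, and reducing to diagonal matrices by the isotropy of each $\textnormal{R}_i\textnormal{co}\,E$ all match the paper's setup. But there is a genuine gap, and you name it yourself: for a general compact (possibly infinite) $K$ you only assert that the rank-one moves can be ``orchestrated'' to terminate, which is precisely the content of the theorem, not a proof of it. The paper closes this gap with a specific idea you are missing: a reduction from arbitrary compact $K$ to \emph{at most two points of $K$}. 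Concretely, it first observes that it suffices to prove $\partial\textnormal{Pco}\,E\subseteq\textnormal{Rco}\,E$ (for $\xi$ interior, compactness of $\textnormal{Pco}\,E$ gives $t_1<0<t_2$ with $\xi+t_i\lambda\in\partial\textnormal{Pco}\,E$ along any rank-one direction $\lambda$, and $\xi$ is a rank-one combination of these). Then, for $\xi\in\partial\textnormal{Pco}\,E$ with supporting parameter $\bar\theta$ from Remark \ref{bord}, it introduces the auxiliary convex function $F$ equal to $\max_{(a,b)\in K}ab$ for $\theta\le 0$, to $m(\theta)$ on $[0,\max_{(a,b)\in K}b]$, and to $\theta^2$ beyond, and exploits the key fact $\lambda_2(\xi)-\lambda_1(\xi)\in\partial F(\bar\theta)$. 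Proposition \ref{propsubdiff} (density of differentiability points, $\partial F=\textnormal{co}\,\overline{S}$) together with compactness of $K$ then produces one or two points $(\tilde a_i,\tilde b_i)\in K$ realizing $F(\bar\theta)=\tilde a_i\tilde b_i+\bar\theta(\tilde b_i-\tilde a_i)$ with $\partial F(\bar\theta)=[\tilde b_2-\tilde a_2,\tilde b_1-\tilde a_1]$, and one checks $\xi\in\textnormal{Rco}\,A$ for the isotropic set $A$ built on those points. The finite-$K$ workhorse is Lemma \ref{numerofinitodipunti} (two points) and Remark \ref{onepoint} (one point), proved by explicit laminations along the three boundary arcs; nothing in your proposal substitutes for either the subdifferential reduction or that lemma.

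A secondary error: your ``in-orbit move'' claims that two matrices with a common singular pair and common determinant can be rank-one connected. This is false as stated: $\textnormal{diag}(a,b)$ and $\textnormal{diag}(b,a)$ with $a\ne b$ share singular values and determinant, yet their difference $\textnormal{diag}(a-b,b-a)$ has rank two. What is true, and what the paper actually uses in step 2 of the proof of Lemma \ref{numerofinitodipunti}, is that through a given $\xi$ there exist rank-one directions $Z$ with $\det(\xi+tZ)$ constant in $t$ (since $\det$ is affine along rank-one lines, one chooses $Z$ with $\det Z=0$ annihilating the cofactor pairing), so that one can slide along the level set $\lambda_1\lambda_2=\textnormal{const}$ until another boundary constraint becomes active. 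Your proposal would need both this correction and, above all, the convex-analysis reduction to two points before it could be completed.
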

 We will first prove the
following lemma:
\begin{lemma}\label{numerofinitodipunti}
Let $K=\{(a_1,b_1), (a_2,b_2)\}$, $0<a_1<a_2, a_1b_1\leq a_2b_2,
 b_2\leq b_1, a_1 \leq b_1, a_2 \leq b_2$ and $E=\{\xi \in \R^{2\times 2}:
 (\lambda_1(\xi),\lambda_2(\xi))\in K\}$.
 Then
\[
\begin{array}{l}
\textnormal{Rco}\,E=\{\xi \in \R^{2\times 2}: \lambda_2(\xi)\leq
b_1,
\\
\phantom{\textnormal{Rco}\,E=\{\xi \in \R^{2\times
2}:}\lambda_1(\xi)\lambda_2(\xi)\leq a_2 b_2
\\
\phantom{\textnormal{Rco}\,E=\{\xi \in \R^{2\times 2}:}
\lambda_1(\xi)\lambda_2(\xi)+\bar{\theta}(\lambda_2(\xi)-\lambda_1(\xi))\leq
a_1b_1+\bar{\theta}(b_1-a_1) \}
\end{array}
\]
where $\dys\bar{\theta}=\frac{a_2b_2-a_1b_1}{b_1-a_1-b_2+a_2}\,.$
\end{lemma}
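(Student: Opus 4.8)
The plan is to prove the two inclusions $\textnormal{Rco}\,E\subseteq S$ and $S\subseteq\textnormal{Rco}\,E$ separately, where $S$ denotes the set on the right-hand side of the claimed identity. The inclusion $\textnormal{Rco}\,E\subseteq S$ is the soft one. Since $\textnormal{Rco}\,E\subseteq\textnormal{Pco}\,E$ by Remark \ref{rcoe}(iv), it suffices to identify $\textnormal{Pco}\,E$ with $S$. For the two-point set one has $m(\theta)=\max\{a_1b_1+\theta(b_1-a_1),\,a_2b_2+\theta(b_2-a_2)\}$, a maximum of two affine functions of $\theta$; using $a_1<a_2$, $b_2\le b_1$ and $a_1b_1\le a_2b_2$ I would check that the second line dominates for $\theta\le\bar\theta$ and the first for $\theta\ge\bar\theta$, the crossing occurring exactly at $\bar\theta=\frac{a_2b_2-a_1b_1}{b_1-a_1-b_2+a_2}$, and that $\bar\theta\in[0,b_1]$ (here $\max_{(a,b)\in K}b=b_1$). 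By Propositions \ref{policard} and \ref{prelim}, $\textnormal{Pco}\,E=\{\xi:\lambda_1(\xi)\lambda_2(\xi)+\theta(\lambda_2(\xi)-\lambda_1(\xi))\le m(\theta)\ \forall\,\theta\in[0,b_1]\}$. The key observation is that for fixed $\xi$ the function $\theta\mapsto\lambda_1\lambda_2+\theta(\lambda_2-\lambda_1)-m(\theta)$ is affine on each of $[0,\bar\theta]$ and $[\bar\theta,b_1]$ (since $m$ is affine there), so the whole continuum of constraints is equivalent to the three obtained at the break points $\theta=0,\bar\theta,b_1$. A direct computation (using $m(b_1)=b_1^2$) shows these three are precisely $\lambda_1\lambda_2\le a_2b_2$, the $\bar\theta$-inequality of the statement, and $\lambda_2\le b_1$. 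Hence $\textnormal{Pco}\,E=S$ and $\textnormal{Rco}\,E\subseteq S$. The degenerate cases $b_1=b_2$ ($\bar\theta=b_1$) and $a_1b_1=a_2b_2$ ($\bar\theta=0$) are handled in the same way.

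The reverse inclusion $S\subseteq\textnormal{Rco}\,E$ is the substantial part and requires an explicit lamination. Since both $E$ and $S$ are isotropic (they are described through the singular values, hence invariant under $\xi\mapsto A\xi B$ for $A,B\in\mathcal{O}(2)$), it is enough to show that every diagonal matrix $\xi=\textnormal{diag}(x,y)$ with $0<x\le y$ and $(x,y)$ satisfying the three inequalities lies in $\textnormal{Rco}\,E$. I would build $\xi$ from $E$ by finitely many rank-one convex combinations, using the representation $\textnormal{Rco}\,E=\bigcup_i\textnormal{R}_i\textnormal{co}\,E$ of Remark \ref{rcoe}(ii). Two elementary rank-one moves are available in a fixed frame. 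First, the constant-determinant shear $\textnormal{diag}(x,y)=\tfrac12\left(\begin{smallmatrix}x&t\\0&y\end{smallmatrix}\right)+\tfrac12\left(\begin{smallmatrix}x&-t\\0&y\end{smallmatrix}\right)$, whose two endpoints are rank-one connected, share the determinant $xy$, and have $\lambda_2$ growing with $|t|$ (so any pair $(xy/\gamma,\gamma)$ with $\gamma\ge y$ is reachable). Second, the splitting $\textnormal{diag}(x,y)=s\,\textnormal{diag}(x_1,y)+(1-s)\textnormal{diag}(x_2,y)$ along the shared entry $y$, which keeps $\lambda_2=y$ fixed and moves $\lambda_1$ (hence the determinant) affinely.

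With these moves I would organize the construction around the two one-point sets $E_1=\{(\lambda_1,\lambda_2)=(a_1,b_1)\}$ and $E_2=\{(\lambda_1,\lambda_2)=(a_2,b_2)\}$, whose rank-one convex hulls $\textnormal{Rco}\,E_i=\{\lambda_2\le b_i,\ \lambda_1\lambda_2\le a_ib_i\}$ are the classical one-point result (\cite{implicitdacomarc}). The three faces of $S$ match these data: $\lambda_2\le b_1$ is the top face carrying $(a_1,b_1)$, $\lambda_1\lambda_2\le a_2b_2$ is the determinant face carrying $(a_2,b_2)$ (these identifications use $b_2\le b_1$ and $a_1b_1\le a_2b_2$), while the $\bar\theta$-face passes through both points. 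The construction then splits into cases: if $\xi\in\textnormal{Rco}\,E_1$ or $\xi\in\textnormal{Rco}\,E_2$ it is already laminated onto one orbit; otherwise $\xi$ lies in the bridging region and I would exhibit a rank-one segment through $\xi$ with one endpoint in $\textnormal{Rco}\,E_1$ and the other in $\textnormal{Rco}\,E_2$, which places $\xi$ in $\textnormal{R}_{i+1}\textnormal{co}\,E$ once the endpoints have been reached.

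I expect the bridging step to be the main obstacle. Producing a single rank-one segment joining the two orbits (which differ in both $\lambda_2$ and in the determinant) cannot be done by either elementary move alone, since a shear fixes the determinant and a diagonal split fixes $\lambda_2$; one must combine them, or invoke a genuine rank-one connection between the orbits. The cleanest control comes from the conformal/anticonformal decomposition, for which $\det \ge 0$ gives $\lambda_2\pm\lambda_1=\sqrt{\norma{\xi}^2\pm2|\det\xi|}$; writing $\xi$ as conformal plus anticonformal part reduces rank-one connectedness of two orbits to an overlap (interlacing) condition on the quantities $\frac{\lambda_1+\lambda_2}{2}$ and $\frac{\lambda_2-\lambda_1}{2}$. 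The delicate point will be to verify that this overlap condition, together with the freedom to choose the $\mathcal{O}(2)\times\mathcal{O}(2)$ frames, is exactly available on the bridging region cut out by the $\bar\theta$-inequality, and that the two endpoints of the constructed segment indeed satisfy $\lambda_2\le b_1,\ \lambda_1\lambda_2\le a_1b_1$ and $\lambda_2\le b_2,\ \lambda_1\lambda_2\le a_2b_2$ respectively; the remaining computations (solving for the shear parameters and weights) are routine.
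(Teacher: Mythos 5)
Your first inclusion is sound, and in fact more self-contained than the paper's: where the paper simply quotes the Cardaliaguet--Tahraoui formula for $\sigma$ of a two-point set, you derive $\textnormal{Pco}\,E=S$ from Propositions \ref{policard} and \ref{prelim} by noting that $m$ is the maximum of two affine functions crossing at $\bar{\theta}\in[0,b_1]$, so the continuum of constraints collapses to $\theta=0,\bar{\theta},b_1$; the check that the $\theta=b_1$ constraint is $\lambda_2\le b_1$ amounts to the factorization $\lambda_1\lambda_2+b_1(\lambda_2-\lambda_1)-b_1^2=(\lambda_1+b_1)(\lambda_2-b_1)$, and this all goes through. The genuine gap is in the inclusion $S\subseteq\textnormal{Rco}\,E$, which is the substance of the lemma, and your proposal stops exactly where the work begins. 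As you observe yourself, your two elementary moves cannot bridge: the shear preserves the determinant and the diagonal split preserves $\lambda_2$, while a point of $S\setminus(\textnormal{Rco}\,E_1\cup\textnormal{Rco}\,E_2)$ must change both to reach either orbit. Everything therefore hinges on producing, through each such $\xi$, a rank-one segment with one endpoint in $\textnormal{Rco}\,E_1$ and the other in $\textnormal{Rco}\,E_2$ --- and this is precisely what you defer to an ``overlap condition'' that you neither state precisely, nor verify on the region cut out by the $\bar{\theta}$-inequality, nor use to check that the endpoints actually satisfy $\lambda_2\le b_1$, $\lambda_1\lambda_2\le a_1b_1$ and $\lambda_2\le b_2$, $\lambda_1\lambda_2\le a_2b_2$ respectively. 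That is not a routine remainder; it is the analytic core of the proof.

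For comparison, the paper supplies exactly the missing object. It first reduces to $\xi\in\partial\textnormal{Pco}\,E$ by compactness: through any interior point, any rank-one line meets $\partial\textnormal{Pco}\,E$ at parameters $t_1<0<t_2$, so interior points follow once boundary points are settled. The boundary is then stratified by which of (\ref{primarcoe2})--(\ref{terzarcoe2}) is active; the faces $\lambda_2=b_1$ and $\lambda_1\lambda_2=a_2b_2$ are handled by explicit laminations (your one-point hulls cover the same ground), and on the $\bar{\theta}$-face the proof exhibits the explicit rank-one matrix $A$ with entries $1$, $\pm\sqrt{(y-\bar{\theta})/(x+\bar{\theta})}$, $-(y-\bar{\theta})/(x+\bar{\theta})$, verifies by direct computation (\ref{proprietaA}) that $t\mapsto\xi+tA$ preserves $\lambda_1\lambda_2+\bar{\theta}(\lambda_2-\lambda_1)$ for $t\in[t_-,t_+]$, and then checks the sign conditions $F(t_-)>0$ and $G(t_+)>0$ (these use $\bar{\theta}\le b_1$ and $\bar{\theta}(b_2-a_2)\ge 0$) to find $t_1\in[t_-,0]$ with $\lambda_2(\xi+t_1A)=b_1$ and $t_2\in[0,t_+]$ with $\lambda_1(\xi+t_2A)\lambda_2(\xi+t_2A)=a_2b_2$, after which the endpoints fall under the previously treated faces. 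Nothing in your sketch substitutes for this: the conformal/anticonformal criterion, even if made precise, answers when two orbits contain rank-one connected pairs, not whether a segment joining the two \emph{hulls} can be forced through a prescribed $\xi$ while its endpoints land on the correct faces. Until you construct such a direction $A$ (or an equivalent family of segments) and carry out the endpoint verifications, the inclusion $S\subseteq\textnormal{Rco}\,E$ remains unproved.
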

\begin{rem}\label{onepoint}
We remember that Dacorogna and Marcellini \cite{implicitdacomarc}
proved that if $K$ is composed by one point $(a,b)$ we have
$$
\textnormal{Rco}\,E=\{\xi \in \R^{2\times 2}: \lambda_2(\xi)\leq
b, \lambda_1(\xi)\lambda_2(\xi)\leq ab\}\,.
$$
\end{rem}
\begin{proof}In \cite{card-tah}
it is showed that the function $\sigma$ (\ref{sigmagise}) defined
for $E$ is
$$\sigma(x)=\inf\left\{b_1, \frac{a_2b_2}{x}, \frac{\thetabar
x+a_1b_1+\thetabar (b_1-a_1) }{\thetabar+x}\right\}\,.
$$
 Thanks to
proposition \ref{policard}, this implies that $\text{Pco}\,E$ is
the set of matrices $\xi$ such that
\begin{eqnarray}
& & \lambda_2(\xi)\leq b_1   \label{primarcoe2},
\\
& &\lambda_1(\xi)\lambda_2(\xi)\leq a_2 b_2\,,
\label{secondarcoe2}
\\
&
&\lambda_1(\xi)\lambda_2(\xi)+\bar{\theta}(\lambda_2(\xi)-\lambda_1(\xi))\leq
a_1b_1+\bar{\theta}(b_1-a_1)\,.  \label{terzarcoe2}
\end{eqnarray}
Therefore to prove the formula of Rco$E$ it is sufficient to prove
that ${\textnormal{Pco}E}=\textnormal{Rco}E.$ For this we will
show that
$$\partial{\textnormal{Pco}E}\subseteq
\textnormal{Rco}E:
$$this will imply the
not trivial inclusion ${\textnormal{Pco}E}\subseteq
\textnormal{Rco}E$ and so the result. In fact, let $\xi \in
\textnormal{int}{\textnormal{Pco}E}$; as ${\textnormal{Pco}E}$ is
compact, then for any rank one matrix $\lambda \in \R^{2\times
2}$, there exist $t_1=t_1(\lambda)<0<t_2=t_2(\lambda)$ such that
$\xi+t_i\lambda\in
\partial{\textnormal{Pco}E}\subseteq \textnormal{Rco}E, i=1,2$.
Defining $\xi_{i}=\xi+t_i\lambda, i=1,2$, we have
$$
\xi=\frac{t_2}{t_2-t_1}\xi_1-\frac{t_1}{t_2-t_1}\xi_2\in
\textnormal{Rco}E\,,
$$
because $\textnormal{rk}(\xi_1-\xi_2)=1.$

Now, let $\xi \in
\partial{\textnormal{Pco}E}$: necessarily $\lambda_2(\xi)=\sigma(\lambda_1(\xi))$
and so $\xi$ satisfies either (\ref{primarcoe2}) or
(\ref{secondarcoe2}) or (\ref{terzarcoe2}) with equality. We are
going to treat these cases separately (steps 1, 2, 3 respectively)
to show that $\xi \in \textnormal{Rco}E$. We can assume without
loss of generality that
$\xi=\textnormal{diag}(\lambda_1(\xi),\lambda_2(\xi))$, as
$\text{Rco}\,E$ is isotropic: in fact, using the same notations as
in the second point of remark \ref{rcoe}, we have by induction on
$i$ that $\textnormal{R}_i \textnormal{co}E$ is isotropic and so
$\textnormal{Rco}E$ is isotropic.

{\it{step 1)}} If $\xi$ satisfies (\ref{primarcoe2}) with
equality, (\ref{terzarcoe2}) implies that $\lambda_1(\xi)\leq
a_1$; then $\xi \in \textnormal{Rco}E,$ as
$$
\xi=\matricesimmetrica{\lambda_1(\xi)}{0}{b_1}=
t\matricesimmetrica{a_1}{0}{b_1}+(1-t)\matricesimmetrica{-a_1}{0}{b_1},
t \in (0,1)\,.
$$
In the next steps we can assume that $\xi$ satisfies
(\ref{primarcoe2}) with strict inequality.

{\it{step 2)}} We suppose that $\xi$ satisfies
(\ref{secondarcoe2}) with equality. Moreover we can assume that
$\xi$ satisfies (\ref{terzarcoe2}) with strict inequality,
otherwise these two equalities imply
$$\xi=\matricesimmetrica{a_2}{0}{b_2} \in E.$$ If we define
$$
V=\{\xi \in \R^{2\times 2}: \lambda_1(\xi)\lambda_2(\xi)=a_2
b_2\}, \,\,Y=V\cap \partial {\textnormal{Pco}E}
$$
we have that $\xi \in \textnormal{rel int}Y$ \footnote{relative
interior of $Y$}. Let $Z$ be the rank one matrix defined by
$$
 Z=\dys\matrice{1}{-\frac{\lambda_2(\xi)}{\lambda_1(\xi)}}{1}
{-\frac{\lambda_2(\xi)}{\lambda_1(\xi)}}:
$$
then
$\lambda_1(\xi+tZ)\lambda_2(\xi+tZ)=\lambda_1(\xi)\lambda_2(\xi)=a_2
b_2 \forall\,\,t \in \R.$ This implies, as $Y$ is compact, that
there exist $t_1<0<t_2: \xi+t_iZ \in
\partial Y, i=1,2.$ Consequently $\xi+t_iZ$ satisfies either (\ref{primarcoe2})
and (\ref{secondarcoe2}) as equalities or (\ref{terzarcoe2}) and
(\ref{secondarcoe2}) as equalities: from the previous studies we
obtain that $\xi+t_iZ \in \textnormal{Rco}E, i=1,2$ and so $\xi
\in \textnormal{Rco}E$.

In the next step we can assume that (\ref{primarcoe2}) and
(\ref{secondarcoe2}) are satisfied as strict inequalities.

{\it{step 3)}} We assume that $\xi$ satisfies (\ref{terzarcoe2})
with equality. Using the explicit expressions of $\lambda_1,
\lambda_2$ (see (\ref{valorisingolari})), it is easy to prove that
if $(\lambda_1(\xi),\lambda_2(\xi))=(x,y)$ the matrix defined by
$$
A=\matrice{1}{\sqrt{\frac{y-\bar{\theta}}{x+\bar{\theta}}}}
{-\sqrt{\frac{y-\bar{\theta}}{x+\bar{\theta}}}}
{-{\frac{y-\bar{\theta}}{x+\bar{\theta}}}}
$$
has the following properties: it is well defined (as $y\geq
\thetabar$ because $\thetabar^2\leq \max\limits_{(a,b)\in K}
ab+\thetabar (b-a)= xy+\thetabar(y-x)$) and
\begin{equation}
\begin{array}{l}
\lambda_1(\xi+tA)\lambda_2(\xi+tA)+\bar{\theta}[\lambda_2(\xi+tA)-\lambda_1(\xi+tA)]=
xy+\bar{\theta}(y-x)
 \vspace{0.2cm}\\
 \forall\,\,t \in [t_{-},t_{+}],\displaystyle
\,\,t_{-}=-\frac{xy(x+\bar{\theta})} {\bar{\theta}(x+y)}\,,\,\,
t_{+}=\frac{(y-x)(x+\bar{\theta})} {x+y}\,.
\end{array}
\label{proprietaA}
\end{equation}In fact
\[
\begin{array}{l}
\lambda_1(\xi+tA)\lambda_2(\xi+tA)=|\det (\xi+tA)|=\dys\left|xy-xt
\frac{y-\thetabar}{x+\thetabar}+ty \right |
\vspace{0.2cm};\\
(\lambda_2(\xi+tA)-\lambda_1(\xi+tA)
)^2=\norma{\xi+tA}^2-2|\det(\xi+tA)|=
\\
\dys =(x+t)^2+\left(y-t\frac{y-\thetabar}{x+\thetabar}\right)^2+
2t^2\frac{y-\thetabar}{x+\thetabar}
-2\dys\left|xy-xt\frac{y-\thetabar}{x+\thetabar}+ty\right| \,.
\end{array}
\]
If we assume that $\dys
xy-xt\frac{y-\thetabar}{x+\thetabar}+ty\geq 0$ (that is $t\ge
t_{-}$) \rife{proprietaA} is equivalent to show that
$$
\thetabar(y-x)+xt\frac{y-\bar{\theta}}{x+\bar{\theta}}-ty=\thetabar
\sqrt{\norma{\xi+tA}^2-2[\det(\xi+tA)]}\,.
$$
If we assume that $\dys
\bar{\theta}(y-x)+xt\frac{y-\bar{\theta}}{x+\bar{\theta}}-yt\geq
0$ (that is $t\leq t_{+}$) we get
\[
\begin{array}{l}\dys
t^2x^2\left(\frac{y-\thetabar}{x+\thetabar}\right)^2+t^2y^2+
2\thetabar(y-x)xt
\frac{y-\thetabar}{x+\thetabar}-2\thetabar(y-x)yt-2t^2xy
\frac{y-\thetabar}{x+\thetabar}=
\\\dys
\thetabar^2\left[
t^2+2xt+t^2\left(\frac{y-\thetabar}{x+\thetabar}\right)^2-2yt
\frac{y-\thetabar}{x+\thetabar}+2t^2\frac{y-\thetabar}{x+\thetabar}
+2xt\frac{y-\thetabar}{x+\thetabar}-2yt \right]\,.
\end{array}
\]
One can easily check that this equality is verified for every $t$
and so (\ref{proprietaA}) is verified.

We prove now that there exists $t_1\in \left[t_{-},0\right]$ such
that $\lambda_2(\xi+t_1A)=b_1:$ this implies that $\xi+t_1 A$
satisfies (\ref{primarcoe2}) and (\ref{terzarcoe2}) as equalities:
as we saw in the first step, $\xi+t_1 A \in \textnormal{Rco}E$.
Moreover we prove also that there exists
 $t_2\in \left[0,t_{+}\right]$ such that
 $\lambda_1(\xi+t_2A)\lambda_2(\xi+t_2A)=a_2b_2:$ this implies that
 $\xi+t_2 A$ satisfies (\ref{secondarcoe2}) and
(\ref{terzarcoe2}) as equalities: as we saw in the second step,
$\xi+t_2 A \in \textnormal{Rco}E$. Consequently $\xi \in
\textnormal{Rco}E$, as it can be written as rank one combination
of $\xi+t_1 A$ and $\xi+t_2 A$.

\noindent {\it{Existence of $t_1$)}} We consider $
F(t)=\lambda_2(\xi+tA)-b_1.$ The existence of $t_1$ follows from
the fact that this function is continuous and $F(0)<0<F(t_{-}):$
in fact
\[
\begin{array}{l}
F(t_{-})=\norma{\xi+t_{-} A}-b_1>0\Longleftrightarrow
\\b_1<\norma{\xi+t_{-} A}=
\vspace{0.2cm}
\\
\displaystyle =
\sqrt{\left(\frac{x^2(\bar{\theta}-y)}{\bar{\theta}(x+y)}
\right)^2+\left(\frac{y^2(\bar{\theta}+x)}
{\bar{\theta}(x+y)}\right)^2+2
\frac{x^2y^2(x+\bar{\theta})(y-\bar{\theta})}
{\bar{\theta}^2(x+y)^2}}\vspace{0.2cm}
\\\displaystyle
=\frac{x^2(y-\thetabar)+y^2(\thetabar +x)}{\bar{\theta}(x+y)}
=\frac{xy+\bar{\theta}(y-x)}{\bar{\theta}}\,.
\end{array}
\]
The last inequality is equivalent to
$$
\bar{\theta}b_1\leq
xy+\bar{\theta}(y-x)=a_1b_1+\bar{\theta}(b_1-a_1)\Longleftrightarrow
b_1\ge \bar{\theta}
$$
which is true.

\noindent {\it{Existence of $t_2$)}} We consider $
G(t)=\lambda_1(\xi+tA)\lambda_2(\xi+tA)-a_2 b_2.$ The existence of
$t_2$ follows from the fact that $G$ is continuous and
$G(0)<0<G(t_{+}):$ in fact $G(t_{+})>0$ if and only if
$$
\left|xy-x\frac{(y-\bar{\theta})(y-x)}{x+y}+
y\frac{(x+\bar{\theta})(y-x)}{x+y}\right| \geq a_2b_2\,.
$$
Using that $xy+\thetabar(y-x)=a_2b_2+\bar{\theta}(b_2-a_2)$ we get
$$xy(x+y)+\bar{\theta}(y-x)(y+x)\geq a_2b_2(x+y)\Longleftrightarrow
a_2b_2+\bar{\theta}(b_2-a_2)\geq a_2b_2
$$
which is true.
\end{proof}
To prove theorem \ref{rcoegise} it will be useful to recall the
following properties about convex functions and their
sub-differential (we will follow the definition of
\cite{rockafellar}).
\begin{defn}
Let $f: \R\to \R$ be a convex function and $\thetabar \in \R$. The
sub-differential of $f$ in $\thetabar$ is the set
$$\partial f(\thetabar)=\{\theta^* \in \R:
f(\theta)\geq
f(\thetabar)+\theta^*(\theta-\thetabar)\quad\forall\,\,\theta \in
\R\}\,.
$$
\end{defn}
\begin{prop}\label{propsubdiff}
Let $f: \R\to \R$ be a convex function. Then

\noindent i) $\partial f(\theta)$ is non empty, compact and convex
for every $\theta \in \R.$

\noindent ii) If $\theta$ is a point of differentiability of $f$
then $\partial f(\theta)=\{f'(\theta)\}\,.$

\noindent iii) The set of points of differentiability of $f$ is
dense in $\R$ and
$$
\partial f(x)=\textnormal{co}\,\overline{S(x)}\,,\,\,
 S(x)=\{\lim\limits_{n\to \infty}f'(x_n)\,\, f
\text{differentiable in}\,\, x_n, x_n\to x\}\,.
$$
\end{prop}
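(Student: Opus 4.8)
The plan is to reduce the entire statement to the elementary one-dimensional theory of one-sided derivatives of a finite convex function, and to prove first the single identification from which all three points follow. Since $f:\R\to\R$ is convex and finite-valued, the difference quotient $h\mapsto \frac{f(\theta+h)-f(\theta)}{h}$ is nondecreasing for $h\neq 0$; consequently the right derivative $f'_+(\theta)=\lim_{h\to 0^+}\frac{f(\theta+h)-f(\theta)}{h}$ and the left derivative $f'_-(\theta)=\lim_{h\to 0^-}\frac{f(\theta+h)-f(\theta)}{h}$ both exist, are finite, and satisfy $f'_-(\theta)\leq f'_+(\theta)$. The key claim to establish at the outset is
\[
\partial f(\theta)=[f'_-(\theta),f'_+(\theta)].
\]
I would prove it by testing the subgradient inequality $f(\theta')\geq f(\theta)+\theta^*(\theta'-\theta)$ with $\theta'>\theta$ and then with $\theta'<\theta$: dividing by $(\theta'-\theta)$ and letting $\theta'\to\theta$ forces $f'_-(\theta)\leq\theta^*\leq f'_+(\theta)$, and conversely any $\theta^*$ in this interval verifies the inequality for all $\theta'$ thanks to the monotonicity of the difference quotients.

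With this identification in hand, point i) is immediate: $[f'_-(\theta),f'_+(\theta)]$ is nonempty (because $f'_-(\theta)\leq f'_+(\theta)$), bounded and closed, hence compact, and every interval is convex. Point ii) is equally direct, since differentiability at $\theta$ means exactly $f'_-(\theta)=f'_+(\theta)=f'(\theta)$, so the interval collapses to the singleton $\{f'(\theta)\}$.

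Point iii) is where the real work lies, and I expect it to be the main obstacle. For the density statement I would use that $f'_+$ is nondecreasing, hence has at most countably many discontinuities, and that at each continuity point $\theta$ of $f'_+$ one has $\lim_{t\uparrow\theta}f'_+(t)=f'_-(\theta)$ and $\lim_{t\downarrow\theta}f'_+(t)=f'_+(\theta)$, so $f'_-(\theta)=f'_+(\theta)$ and $f$ is differentiable there; thus the non-differentiability set is countable and its complement is dense. For the formula I would exploit the one-sided continuity of the monotone functions $f'_+$ (right-continuous) and $f'_-$ (left-continuous). Choosing differentiability points $x_n\downarrow x$ gives $f'(x_n)=f'_+(x_n)\to f'_+(x)$, and $x_n\uparrow x$ gives $f'(x_n)=f'_-(x_n)\to f'_-(x)$, so both values lie in $S(x)$. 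Conversely, any convergent sequence $f'(x_n)$ with $x_n\to x$ at differentiability points, split into its parts approaching from the right and from the left, must converge to $f'_+(x)$ or $f'_-(x)$ respectively; hence $S(x)=\{f'_-(x),f'_+(x)\}$.

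Finally, since $S(x)$ is a finite (thus closed) set, $\overline{S(x)}=S(x)$, and its convex hull is precisely the segment $[f'_-(x),f'_+(x)]$, which by the first step equals $\partial f(x)$. This closes point iii) and completes the proof. The delicate ingredient throughout is the regularity of the one-sided derivatives—their monotonicity and one-sided continuity—which I would either establish directly from convexity or invoke from \cite{rockafellar}; everything else is a routine consequence of the interval representation of $\partial f$.
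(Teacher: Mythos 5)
Your proof is correct, but it takes a genuinely different route from the paper, which does not prove Proposition \ref{propsubdiff} at all: in the remark following the statement, the author simply cites \cite{rockafellar} (page 218 for i), Theorem 26.1 for ii), and a combination of Theorems 25.6 and 17.2 for iii)), results formulated there for convex functions on $\R^n$, where the formula $\partial f(x)=\textnormal{co}\,\overline{S(x)}$ is a nontrivial statement about limits of gradients. You instead give a self-contained one-dimensional argument: the identification $\partial f(\theta)=[f'_-(\theta),f'_+(\theta)]$, proved directly from the monotonicity of difference quotients, immediately yields i) and ii); and for iii) you use the regularity of the one-sided derivatives --- $f'_+$ nondecreasing has countably many discontinuities, which gives the density, while $\lim_{t\uparrow x}f'_+(t)=f'_-(x)$ together with the right-continuity of $f'_+$ gives $S(x)=\{f'_-(x),f'_+(x)\}$, whose closed convex hull is exactly the interval $[f'_-(x),f'_+(x)]=\partial f(x)$. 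All the steps you sketch check out, including the converse inclusion for $S(x)$: a convergent sequence of derivative values at points $x_n\to x$ splits into subsequences approaching from the left and from the right, with limits $f'_-(x)$ and $f'_+(x)$ respectively (and any terms with $x_n=x$ contribute $f'(x)$, which then coincides with both). What your approach buys is elementarity and self-containedness, at the price of exploiting a degeneracy special to dimension one: $S(x)$ is a set of at most two points, so the closure and convex hull operations in the stated formula are trivial. What the paper's citation buys is brevity and generality --- Rockafellar's theorems hold in arbitrary finite dimension, where $S(x)$ need not be finite and the convex-hull-of-closure structure genuinely matters --- though for the paper's application (the function $F(\theta)$ on $\R$) your one-dimensional argument would have sufficed.
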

\begin{rem}
The proof of {\it{i)}} can be found at page 218 of
\cite{rockafellar}; {\it{ii)}} is theorem 26.1 of
\cite{rockafellar}; the proof of {\it{iii)}} is a direct
combination of theorems 25.6 and 17.2 of \cite{rockafellar}.
\end{rem}
We pass now to the proof of theorem \ref{rcoegise}.
\begin{proof}
Thanks to propositions \ref{policard} and \ref{prelim} it is
sufficient to prove that $\textnormal{Pco}\,E=
\textnormal{Rco}\,E.$ For this, as in lemma
\ref{numerofinitodipunti}, we will show the inclusion
$$\partial {\textnormal{Pco}\,E}\subseteq \textnormal{Rco}\,E\,.$$
Let $\xi \in
\partial {\textnormal{Pco}\,E}$. We have seen in remark \ref{bord} that there exists
$\overline{\theta}\in [0,\max\limits_{(a,b)\in
K}b]$ such that
$$
\lambda_1(\xi)\lambda_2(\xi)+\overline{\theta}(\lambda_2(\xi)-\lambda_1(\xi))
=m(\thetabar)\,,
$$
and for every $\theta \in [0,\max\limits_{(a,b)\in K}b]$
$$\lambda_1(\xi)\lambda_2(\xi)+{\theta}(\lambda_2(\xi)-\lambda_1(\xi))
\leq m(\theta)\,.$$ We define
\[
F(\theta)=\left\{
\begin{array}{ll}
\max\limits_{(a,b) \in K}ab&\theta\leq 0 \vspace{0.2cm}
\\
m(\theta)&\theta \in [0,\max\limits_{(a,b)\in K}b]\vspace{0.2cm}
\\
\theta^2&\theta\geq \max\limits_{(a,b)\in K}b\,.
\end{array}
 \right.
\]
We observe that $\xi$ satisfies
$$
\lambda_1(\xi)\lambda_2(\xi)+\theta(\lambda_2(\xi)-\lambda_1(\xi))
\leq F(\theta),\quad\forall\,\,\theta \in \R\,,
$$
and there exists $\overline{\theta}\in [0,\max\limits_{(a,b)\in
K}b]$ such that
$$
\lambda_1(\xi)\lambda_2(\xi)+\overline{\theta}(\lambda_2(\xi)-\lambda_1(\xi))
=F(\thetabar)\,.
$$
The following remarks will be useful:

$\bullet$ $\lambda_2(\xi)-\lambda_1(\xi) \in
\partial F(\thetabar)\,.$

$\bullet$ One can easily check that $F$ is convex: the previous
proposition implies that $\partial
F(\thetabar)=[\alpha(\thetabar), \beta(\thetabar)]$ for some
$\alpha(\thetabar), \beta(\thetabar) \in \R$.

$\bullet$ If $\theta_0$ is a point of differentiability for the
function $F$ then $\partial F(\theta_0)=\{F'(\theta_0)\}=
b_0-a_0\,$ for some $(a_0,b_0)\in K$, such that
$F(\theta_0)=\max\limits_{(a,b)\in K}\{ab+\theta_0(b-a)\}=a_0
b_0+\theta_0(b_0-a_0)\,.$

We are going to show that $\xi \in \textnormal{Rco}\,E$: for this
we will find a set $a\subset K$ composed by one or two points such
that letting $A=\{\xi \in \R^{2\times 2}:
(\lambda_1(\xi),\lambda_2(\xi))\in a\}$ we have $\xi \in
{\textnormal{Rco} A}\subseteq \text{Rco}\,E$. We will distinguish
the cases $\thetabar=0$, $\thetabar \in (0,\max\limits_{(a,b)\in
K}b)$, $\thetabar=\max\limits_{(a,b)\in K}b\,$ respectively in the
steps 1, 2, 3.

\noindent {\it{step 1)}} We analyse the case $\thetabar=0$, for
which $\max\limits_{(a,b) \in K}ab=\lambda_1(\xi)\lambda_2(\xi)$.
We study the set $S(0)$ defined in proposition \ref{propsubdiff}
(the points $\theta_n$ will be points of differentiability of $F$
throughout this proof):
$$
S(0)=\{\lim\limits_{n\to \infty}F'(\theta_n), \theta_n\to 0\}=
\{\lim\limits_{n\to \infty}F'(\theta_n), \theta_n\to 0^+\}\cup
\{0\}
$$
as for every $\theta<0$ $F$ is constant. Let
$$p_M=\sup\{\lim\limits_{n\to \infty}F'(\theta_n), \theta_n\to
0^+\}\,\geq 0\,.$$ Let $\theta_n \to 0^+$ be points of
differentiability for $F$: then $F'(\theta_n)=b_n-a_n$, for some
$(a_n,b_n) \in K;$ therefore every point of $S(0)$ is $0$ or $b-a$
for some $(a,b)\in K$, because of the compactness of $K$. The fact
that $K$ is compact implies also that $p_M= \bar{b}- \bar{a},$ for
some $(\bar{a},\bar{b})\in K$ and so
$$
\partial F(0)=\textnormal{co} \overline{S(0)}=[0,\bar b-\bar a]\ni
\lambda_2(\xi)-\lambda_1(\xi)\,.
$$
It is easy to see that there exists $(\tilde{a},\tilde{b}) \in K$
such that $\bar b-\bar a=\tilde b-\tilde a$ and
$\max\limits_{(a,b)\in K}ab=\tilde a\tilde b$. In fact by
definition of $\bar b-\bar a$,  $\forall\,\varepsilon>0$ there
exists $\theta_n^{\varepsilon}$ which goes to $0$ for $n\to
\infty$ and a sequence $(a_n^{\varepsilon},b_n^{\varepsilon}) \in
K$ such that
\begin{equation}
\bar b-\bar a-\varepsilon\leq \lim\limits_{n\to
\infty}F'(\theta_n^{\varepsilon})=\lim\limits_{n\to
\infty}b_n^{\varepsilon}-a_n^{\varepsilon}\leq \bar b-\bar a.
\label{defsupthetabarzero}
\end{equation}
We observe that, as $\theta_n^{\varepsilon}$ is a point of
differentiability of $F$
$$\max\limits_{(a,b)\in
K}ab+\theta_n^{\varepsilon}(b-a)=
a_n^{\varepsilon}b_n^{\varepsilon}+\theta_n^{\varepsilon}
(b_n^{\varepsilon}-a_n^{\varepsilon})\,.
$$
Now, if we consider the points
$(a_n^{\varepsilon},b_n^{\varepsilon}) \in K$, as $K$ is compact,
we can say, up to a sub-sequence that
$(a_n^{\varepsilon},b_n^{\varepsilon})\to
(a^{\varepsilon},b^{\varepsilon}) \in K\,.$ For the same reason,
if $\varepsilon\to 0$ $(a^{\varepsilon},b^{\varepsilon})\to
(\tilde{a},\tilde{b})\in K$\,. Passing to the limit for $n\to
\infty$ in the last relation, we obtain from the continuity in
$\theta$ of the function $\max\limits_{(a,b)\in K}ab+\theta(b-a)$
$$
\lim\limits_{n\to \infty}\max\limits_{(a,b)\in
K}ab+\theta_n^{\varepsilon}(b-a)=\max\limits_{(a,b)\in
K}ab=\lim\limits_{n\to
\infty}a_n^{\varepsilon}b_n^{\varepsilon}+\theta_n^{\varepsilon}
(b_n^{\varepsilon}-a_n^{\varepsilon})=a^{\varepsilon}b^{\varepsilon}\,,
$$
and so
$$
\max\limits_{(a,b)\in
K}ab=\lambda_1(\xi)\lambda_2(\xi)=\lim\limits_{\varepsilon\to
0}a^{\varepsilon}b^{\varepsilon}=\tilde{a}\tilde{b}\,.
$$
From the relation (\ref{defsupthetabarzero}) we get,
$$
\bar b-\bar a\leq \lim\limits_{\varepsilon\to 0 }\lim\limits_{n\to
\infty}b_n^{\varepsilon}-a_n^{\varepsilon}=\tilde{b}-\tilde{a}\leq
\bar b-\bar a\Longleftrightarrow \bar b-\bar
a=\tilde{b}-\tilde{a}\,.
$$

Then we have that
$\lambda_1(\xi)\lambda_2(\xi)=\tilde{a}\tilde{b}$ and
$\lambda_2(\xi)-\lambda_1(\xi)\leq \tilde{b}-\tilde{a}$, that is
$\lambda_2(\xi)\leq \tilde{b}\,.$ This is equivalent, thanks to
remark \ref{onepoint}, to say that $\xi \in
\textnormal{Rco}A\subseteq \textnormal{Rco}E$, where
$$A=\{\xi \in \R^{2\times 2}:
(\lambda_1(\xi),\lambda_2(\xi))=(\tilde{a},\tilde{b})\}\,.$$

\noindent {\it{step 2)}} We study the case $\thetabar \in
(0,\max\limits_{(a,b)\in K}b)$. As in step 1, if
\[
\begin{array}{l}
p_m=\inf\{\lim\limits_{n\to \infty}F'(\theta_n), \theta_n\to
\thetabar\}\\
p_M=\sup\{\lim\limits_{n\to \infty}F'(\theta_n), \theta_n\to
\thetabar\}\,,
\end{array}
\]
we have for some $(a_i,b_i) \in K, i=1,2$
$$
\partial F(\thetabar)=[b_2-a_2,b_1-a_1]\,.
$$
Following the same kind of study as in step 1, one can show that
there exist $(\tilde a_i,\tilde b_i)\in K, i=1,2$ such that
\[
\begin{array}{l}
\lambda_1(\xi)\lambda_2(\xi)+\thetabar(\lambda_2(\xi)-\lambda_1(\xi))=F(\thetabar)
=\max \limits_{(a,b)\in K}\{ab+\thetabar(b-a)\}=
\\
= \tilde{a_i}\tilde{b_i}+\bar{\theta}(\tilde{b_i}-\tilde{a_i}),
\,\,\,\,\,\,\,\,\,\,\tilde{b_i}-\tilde{a_i}=b_i-a_i.
\end{array}
\]
 We now show that $\xi\in
\textnormal{Rco}\,A$, where $$A=\{\xi \in \R^{2\times 2}:
 (\lambda_1(\xi),\lambda_2(\xi))=(\tilde{a_i},\tilde{b_i}),\,i=1,2\}\,:$$
 for this, thanks to lemma \ref{numerofinitodipunti}, it is sufficient to show that
\[
\begin{array}{l}
\tilde{b_2}\leq  \lambda_2(\xi)\leq \tilde{b_1},\vspace{0.1cm}\\
\tilde{a_1}\tilde{b_1}\leq  \lambda_1(\xi)\lambda_2(\xi)\leq
\tilde{a_2}\tilde{b_2}, \vspace{0.1cm}\\
\tilde{a_1}<  \tilde{a_2} \,.
\end{array}
\]
For every $\theta \in [0,\max\limits_{(a,b)\in K}b]$ we have that
$\theta^2\leq \max\limits_{(a,b)\in K}ab+\theta(b-a)$, as seen in
proposition \ref{prelim}: writing this inequality for
$\theta=\thetabar$ we get $\lambda_2(\xi)>\thetabar$. As
$$
\tilde{a_2}\tilde{b_2}+\theta(\tilde{b_2}-\tilde{a_2})<
\lambda_1(\xi)\lambda_2(\xi)+\theta(\lambda_2(\xi)-\lambda_1(\xi))<
\tilde{a_1}\tilde{b_1}+\theta(\tilde{b_1}-\tilde{a_1})
$$
for every $\theta\geq \thetabar$ and in particular for
$\theta=\lambda_2(\xi)$ we have that $\tilde{b_2}\leq
\lambda_2(\xi)\leq \tilde{b_1}$, that is the first condition is
verified. As $\tilde{b_2}-\tilde{a_2}\leq
\lambda_2(\xi)-\lambda_1(\xi)\leq \tilde{b_1}-\tilde{a_1}$, then
$\tilde{a_1}\tilde{b_1}\leq \lambda_1(\xi)\lambda_2(\xi)\leq
\tilde{a_2}\tilde{b_2}$ and $\tilde{a_1}<  \tilde{a_2}\,.$

\noindent {\it{step 3)}} We study the case
$\thetabar=\max\limits_{(a,b)\in K}b$: we have, as we saw in
proposition \ref{prelim} $\lambda_2(\xi)=\max\limits_{(a,b)\in
K}b.$ We define
$$p_m=\inf\{\lim\limits_{n\to \infty}F'(\theta_n), \theta_n\to
\max\limits_{(a,b)\in K}b\}\,$$ and we have, as in the previous
steps
$$
\partial F(\thetabar)=[\bar b-\bar a, \beta(\thetabar)]\,,(\bar a,\bar
b)\in K\,.
$$
It is easy to show that there exists $(\tilde{a},\tilde{b})\in K$
such that $\bar b-\bar a=\tilde{b}-\tilde{a}\,$ and
$\max\limits_{(a,b)\in K
}ab+\bar{\theta}(b-a)=[\max\limits_{(a,b)\in
K}b]^2=\tilde{a}\tilde{b}+\thetabar(\tilde{b}-\tilde{a})$.
Therefore $\tilde{b}=\max\limits_{(a,b)\in K}b\,.$ Defining
$$A=\{\xi \in \R^{2\times 2}:
(\lambda_1(\xi),\lambda_2(\xi))=(\tilde{a},\tilde{b})\}$$ we have
that $\xi \in \textnormal{Rco}\,A \subseteq \textnormal{Rco}\,E$,
thank to remark \ref{onepoint}: in fact
$\lambda_2(\xi)-\lambda_1(\xi)\geq
\tilde{b}-\tilde{a}=\max\limits_{(a,b)\in K}b-\tilde{a}\,,$ that
is $\lambda_1(\xi)\leq \tilde{a}\,,$ and
$\lambda_2(\xi)=\tilde{b}\,.$

The formula for the interior of $\text{Rco}\,E$ follows from
proposition \ref{interno} and from the fact that
$\text{Pco}\,E=\text{Rco}\,E\,,$ as we have just showed.
\end{proof}

\section{The existence theorem}
In this section we are going to show theorem \ref{teorema
d'esistenza nel nostro caso dacorognamarcellini}. The proof will
be a direct combination of theorem \ref{Giovio} and of proposition
\ref{approximation}. To do this it will be useful to define the so
called approximation property (this definition is given in
\cite{implicitdacomarc}).
\begin{defn}
Let $E\subset K(E)\subset \R^{2\times 2}$. We say that $E$ and
$K(E)$ have the approximation property if there exists a family of
closed sets $E_\delta$ and $K(E_\delta)$, $\delta>0$, such that

\noindent 1) $E_\delta\subset K(E_\delta)\subset\textnormal{int}
K(E)$ for every $\delta>0;$

\noindent 2) for every $\varepsilon>0$ there exists
$\delta_0=\delta_0(\varepsilon)>0$ such that
\textnormal{dist}$(\eta,E)\leq \varepsilon$ for every $\eta \in
E_\delta$ and $\delta \in [0,\delta_0];$

\noindent 3) if $\eta \in \textnormal{int} K(E)$ then $\eta \in
K(E_\delta)$ for every $\delta>0$ sufficiently small.
\end{defn}
We can now show the following result.
\begin{prop}\label{approximation}
Let $E$ be defined by
$$
E=\{\xi \in \R^{2\times 2}: (\lambda_1(\xi),\lambda_2(\xi))\in K\}
$$
with $K$ compact satisfying
(\ref{ipotesidipositivitadeipuntidiK}).
 Then
 $E$ and $\textnormal{Rco}E$ have the approximation property with
 $K(E_{\delta})=\textnormal{Rco}\,E_{\delta},$ if $$
 E_{\delta}=\bigcup\limits_{(a,b)\in K}E_{\delta}^{(a,b)},\,\,\,\, \,
E_{\delta}^{(a,b)}=\{\xi \in \R^{2\times 2}: (\lambda_1(\xi),
\lambda_2(\xi))=(a-\delta,b-\delta)\},$$ for $0\leq\delta\leq
\min\limits_{(a,b)\in K}a/2$.
\end{prop}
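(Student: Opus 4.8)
The plan is to verify directly the three conditions of the approximation property for the family $E_\delta$ and $K(E_\delta)=\textnormal{Rco}\,E_\delta$. The key geometric idea is that $E_\delta$ is obtained from $E$ by shifting every point $(a,b)$ of $K$ to $(a-\delta,b-\delta)$, i.e.\ translating $K$ along the diagonal direction $(-1,-1)$ in the plane of singular values. First I would record, using Theorem~\ref{rcoegise}, the explicit description
\[
\textnormal{Rco}\,E_\delta=\left\{\xi:
(\lambda_1(\xi)+\delta)(\lambda_2(\xi)+\delta)+\theta\big(\lambda_2(\xi)-\lambda_1(\xi)\big)\leq m_\delta(\theta),\ \forall\,\theta\in[0,\max_{(a,b)\in K}b-\delta]\right\},
\]
where $m_\delta(\theta)=\max_{(a,b)\in K}\{(a-\delta)(b-\delta)+\theta(b-a)\}$, noting that the shift leaves the differences $b-a$ unchanged. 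The restriction $\delta\leq \min_{(a,b)\in K}a/2$ guarantees that the shifted set $K-(\delta,\delta)$ still lies in $T$ and satisfies \rife{ipotesidipositivitadeipuntidiK}, so Theorem~\ref{rcoegise} genuinely applies to $E_\delta$.

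\textbf{Conditions 1 and 2.} Condition 2 is the easy one: since every $\eta\in E_\delta$ has singular values $(a-\delta,b-\delta)$ with $(a,b)\in K$, its distance to $E$ is controlled by the distance of $(a-\delta,b-\delta)$ to $K$, which is at most $\sqrt{2}\,\delta$; thus $\textnormal{dist}(\eta,E)\leq\varepsilon$ as soon as $\delta\leq\delta_0:=\varepsilon/\sqrt2$. For condition 1, the inclusion $E_\delta\subset\textnormal{Rco}\,E_\delta$ is immediate, so the content is $\textnormal{Rco}\,E_\delta\subset\textnormal{int}\,\textnormal{Rco}\,E$. Here I would use the interior description from Theorem~\ref{rcoegise}: a matrix in $\textnormal{Rco}\,E_\delta$ satisfies the shifted inequalities with $m_\delta$, and I must deduce the strict unshifted inequalities $\lambda_1\lambda_2+\theta(\lambda_2-\lambda_1)<m(\theta)$ for all $\theta\in[0,\max_{(a,b)}b]$. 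The point is that shifting $K$ inward strictly decreases each defining quantity: expanding $(a-\delta)(b-\delta)=ab-\delta(a+b)+\delta^2$ one sees $m_\delta(\theta)<m(\theta)$ for every $\theta$ (using $a+b>2\delta$, which is where the hypothesis $\delta\leq\min a/2$ enters), and similarly $(\lambda_1+\delta)(\lambda_2+\delta)>\lambda_1\lambda_2$. Combining these gives the strict inequality, hence membership in $\textnormal{int}\,\textnormal{Rco}\,E$; one must also check the extra endpoint constraint coming from the slightly shorter $\theta$-interval, which reduces to the same kind of monotonicity estimate.

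\textbf{Condition 3 and the main obstacle.} Condition 3 requires that if $\eta\in\textnormal{int}\,\textnormal{Rco}\,E$, then $\eta\in\textnormal{Rco}\,E_\delta$ for all small $\delta$. By Theorem~\ref{rcoegise}, $\eta$ satisfies the strict inequalities with $m$, and I must show it satisfies the non-strict inequalities with $m_\delta$ once $\delta$ is small. This is where the real work lies, and I expect it to be the main obstacle: the quantities $\lambda_1\lambda_2+\theta(\lambda_2-\lambda_1)-m_\delta(\theta)$ move in opposite directions as $\delta$ grows (the product term $(\lambda_1+\delta)(\lambda_2+\delta)$ increases while $m_\delta(\theta)$ decreases), so one cannot simply invoke continuity termwise. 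The clean way is a compactness argument: the function $\theta\mapsto \lambda_1\lambda_2+\theta(\lambda_2-\lambda_1)-m(\theta)$ is continuous and strictly negative on the compact interval $[0,\max_{(a,b)}b]$, hence bounded above by some $-c<0$; then I would estimate the $\delta$-perturbation of both the left-hand side and $m_\delta$ uniformly in $\theta$ by a quantity $C\delta$, and choose $\delta$ small enough that $C\delta<c$, which forces the shifted inequality. The uniform bound on $|m(\theta)-m_\delta(\theta)|$ follows from the Lipschitz dependence of the maximum on $\delta$, and the perturbation of the left-hand side is linear in $\delta$ on the bounded region where $\eta$ lives. With condition 3 established, all three properties hold and the proof is complete.
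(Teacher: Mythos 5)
Your displayed formula for $\textnormal{Rco}\,E_\delta$ is wrong, and this is the one genuine error in the proposal. Applying theorem \ref{rcoegise} to $E_\delta$ is indeed legitimate (the shifted set $K_\delta=\{(a-\delta,b-\delta):(a,b)\in K\}$ is compact, lies in $T$ and satisfies \rife{ipotesidipositivitadeipuntidiK} because $\delta\le\min_{(a,b)\in K}a/2$), but it yields
\[
\textnormal{Rco}\,E_\delta=\left\{\xi\in\R^{2\times 2}:\ \lambda_1(\xi)\lambda_2(\xi)+\theta\left(\lambda_2(\xi)-\lambda_1(\xi)\right)\le m_\delta(\theta),\ \forall\,\theta\in\left[0,\max_{(a,b)\in K}b-\delta\right]\right\},
\]
with the shift appearing only inside $m_\delta(\theta)=\max_{(a,b)\in K}\{(a-\delta)(b-\delta)+\theta(b-a)\}$: the singular values of $\xi$ are never translated. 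Your left-hand side $(\lambda_1(\xi)+\delta)(\lambda_2(\xi)+\delta)$ cannot be correct: already for a one-point $K$ and $\xi\in E_\delta$ it would demand $ab+\theta(b-a)\le(a-\delta)(b-\delta)+\theta(b-a)$, contradicting the trivial inclusion $E_\delta\subset\textnormal{Rco}\,E_\delta$. As a consequence, the ``main obstacle'' you locate in condition 3 (left side and $m_\delta$ moving in opposite directions as $\delta$ grows) is an artifact of this slip: with the correct formula the left-hand side does not depend on $\delta$ at all, and condition 3 reduces exactly to your compactness-gap argument combined with the uniform estimate $|m(\theta)-m_\delta(\theta)|\le\max_{(a,b)\in K}\delta(a+b+\delta)$ --- which is precisely the paper's own step 3, so that part of your plan is sound once the formula is repaired.

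After the correction, your route for condition 1 is genuinely different from, and shorter than, the paper's: from $m_\delta(\theta)<m(\theta)$ (strict for $\delta>0$, since a maximizing point satisfies $a+b-\delta\ge\frac32\min_{(a,b)\in K}a>0$) you get $\lambda_1(\xi)\lambda_2(\xi)+\theta(\lambda_2(\xi)-\lambda_1(\xi))\le m_\delta(\theta)<m(\theta)$ at once on $[0,\max_{(a,b)\in K}b-\delta]$, whereas the paper proves only $E_\delta\subset\textnormal{int\,Rco}\,E$ and then must establish that $\textnormal{int\,Rco}\,E$ is rank one convex via the Cardaliaguet--Tahraoui function $H_{\theta}$ of proposition \ref{propfunzioneHtheta} --- the longest portion of its proof, which your argument bypasses entirely (the paper itself invokes theorem \ref{rcoegise} for $E_\delta$ only in its step 3). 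Be aware, however, that your endpoint remark hides a real step rather than ``the same kind of monotonicity estimate'': for $\theta\in(\max_{(a,b)\in K}b-\delta,\max_{(a,b)\in K}b]$ you should first deduce $\lambda_2(\xi)\le\max_{(a,b)\in K}b-\delta$ from the constraint at $\theta^*=\max_{(a,b)\in K}b-\delta$, where $m_\delta(\theta^*)=(\theta^*)^2$ by \rife{uguale} applied to $K_\delta$ and the constraint factors as $(\lambda_2(\xi)-\theta^*)(\lambda_1(\xi)+\theta^*)\le 0$; then $\lambda_2(\xi)<\theta$ together with $m(\theta)\ge\theta^2$ on $[0,\max_{(a,b)\in K}b]$ (proposition \ref{prelim}) supplies the missing strict inequalities. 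Your condition 2 (distance $\sqrt2\,\delta$ via the singular value decomposition) coincides with the paper's step 2.
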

It will be useful the following result due to Cardaliaguet and
Tahraoui \cite{card-tah}:
\begin{prop}\label{propfunzioneHtheta}
For every $\theta\geq 0$ the function $H_{\theta}:\R^{2\times
2}\to \R$ defined by
$$
H_{\theta}(\xi)=\max\{\lambda_1(\xi)\lambda_2(\xi)+
\theta(\lambda_2(\xi)-\lambda_1(\xi))- \theta^2, 0\}
$$
is rank one convex.
\end{prop}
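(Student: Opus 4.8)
The plan is to exploit the conformal--anticonformal splitting of $2\times 2$ matrices, which expresses the singular values through sums and differences of two convex functions. For $\xi=\left(\begin{smallmatrix} a & b\\ c& d\end{smallmatrix}\right)$ I set $r(\xi)=\frac12\sqrt{(a+d)^2+(b-c)^2}$ and $s(\xi)=\frac12\sqrt{(a-d)^2+(b+c)^2}$. Each of $r,s$ is a multiple of the Euclidean norm of a linear image of $\xi$, hence convex on $\R^{2\times2}$. A direct computation from (\ref{valorisingolari}) gives the identities $\lambda_2(\xi)=r(\xi)+s(\xi)$, $\lambda_1(\xi)=|r(\xi)-s(\xi)|$ and $r(\xi)^2-s(\xi)^2=\det\xi$ (so that $\lambda_1\lambda_2=|\det\xi|$); these are the only algebraic facts I will need.

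First I would rewrite $H_\theta$ in product form. Since $\theta\geq 0$ we have $\lambda_1+\theta\geq 0$, and expanding $(\lambda_1+\theta)(\lambda_2-\theta)=\lambda_1\lambda_2+\theta(\lambda_2-\lambda_1)-\theta^2$ shows that $H_\theta(\xi)=(\lambda_1(\xi)+\theta)(\lambda_2(\xi)-\theta)^{+}$, where $(\,\cdot\,)^{+}$ is the positive part. Using $\lambda_1+\theta=\max(r-s+\theta,\,s-r+\theta)$ together with the nonnegativity of $(\lambda_2-\theta)^{+}=(r+s-\theta)^{+}$, I factor the maximum out of the product (legitimate because the second factor is $\geq 0$) to obtain $H_\theta=\max(\phi,\psi)$ with $\phi=(r-s+\theta)(r+s-\theta)^{+}$ and $\psi=(s-r+\theta)(r+s-\theta)^{+}$. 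Since a pointwise maximum of rank one convex functions is again rank one convex (convexity along each rank one line is preserved under $\max$), it suffices to prove that $\phi$ and $\psi$ are rank one convex; by the symmetry $r\leftrightarrow s$ I only treat $\phi$.

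Fix $\xi$ and a rank one matrix $\eta$ (the case $\eta=0$ being trivial); I must show that $t\mapsto\phi(\xi+t\eta)$ is convex. Note that $\phi=r^2-(s-\theta)^2$ wherever $\lambda_2=r+s\geq\theta$ and $\phi=0$ otherwise, so I introduce $g(t)=\det(\xi+t\eta)+2\theta\,s(\xi+t\eta)-\theta^2$, which equals $r(\xi+t\eta)^2-(s(\xi+t\eta)-\theta)^2$. Because $\eta$ has rank one, $\det(\xi+t\eta)$ is affine in $t$, while $s(\xi+t\eta)$ is convex and $2\theta\geq 0$; hence $g$ is convex in $t$. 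The heart of the matter is to control the cut-off. If $\lambda_2(\xi+t\eta)\geq\theta$ for all $t$, then $\phi(\xi+t\eta)=g(t)$ is convex and we are done. Otherwise $\{t:\lambda_2(\xi+t\eta)<\theta\}$ is a bounded open interval $(t_-,t_+)$ (bounded because $\lambda_2(\xi+t\eta)\to\infty$ as $|t|\to\infty$, an interval because $\lambda_2$ is convex); on it $\phi=0$, while at the endpoints $\lambda_2=\theta$ forces $s-\theta=-r$ and hence $g(t_\pm)=0$. A convex function vanishing at $t_-<t_+$ is $\leq 0$ on $[t_-,t_+]$ and $\geq 0$ outside, so along this line $\phi(\xi+t\eta)=\max(g(t),0)$, which is convex.

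This shows that $\phi$, and by symmetry $\psi$, is convex along every rank one line, i.e. rank one convex, whence $H_\theta=\max(\phi,\psi)$ is rank one convex, as claimed. I expect the cut-off analysis of the third paragraph to be the main obstacle: the product form of $H_\theta$ is only piecewise smooth, and the reduction $\phi(\xi+t\eta)=\max(g(t),0)$ is exactly what reconciles the two pieces $r^2-(s-\theta)^2$ and $0$ across the seam $\lambda_2=\theta$. Everything else is either elementary bookkeeping with convexity or the explicit identities for the singular values coming from the conformal--anticonformal decomposition.
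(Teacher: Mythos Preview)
Your argument is correct. The conformal--anticonformal identities $\lambda_2=r+s$, $\lambda_1=|r-s|$, $r^2-s^2=\det\xi$ are standard and your algebraic reduction $H_\theta=\max(\phi,\psi)$ with $\phi=(r-s+\theta)(r+s-\theta)^{+}$ is clean. The key step---that along a rank one line $g(t)=\det(\xi+t\eta)+2\theta\,s(\xi+t\eta)-\theta^2$ is convex (affine plus nonnegative multiple of a convex function) and that $\phi(\xi+t\eta)=\max(g(t),0)$ thanks to the vanishing of $g$ at the endpoints of the interval $\{\lambda_2<\theta\}$---is carried out carefully and is sound. The symmetric treatment of $\psi$ goes through with $-\det(\xi+t\eta)+2\theta\,r(\xi+t\eta)-\theta^2$, which is equally convex.

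By contrast, the paper does not prove the proposition at all: it simply records (in the remark following the statement) that Cardaliaguet and Tahraoui \cite{card-tah} establish the stronger fact that $H_\theta$ is \emph{polyconvex}, from which rank one convexity follows. Your route is therefore genuinely different: it is a direct, self-contained verification of rank one convexity that avoids the detour through polyconvexity and the external reference. What your approach buys is elementarity and transparency---the whole argument fits in a few lines once the $r,s$ coordinates are introduced---while the paper's citation buys a stronger conclusion (polyconvexity) at the cost of relying on another source.
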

\begin{rem}
In \cite{card-tah} Cardaliaguet and Tahraoui show that the
function $H_{\theta}$ is polyconvex.
\end{rem} We can now prove proposition \ref{approximation}.
\begin{proof}
We remark that $E_\delta$ is compact: this will let us use the
representation theorem \ref{rcoegise}. In the following three
steps, we show the three conditions of the approximation property
respectively.

\noindent {{$1)  \mbox{Rco}(E_\delta)\subset\mbox{intRco}E,
\forall\,\, \delta>0:$}}

Let $(a,b)\in K$ be fixed. Then we have
\[
\begin{array}{l}
E_{\delta}^{(a,b)} \subseteq \{\xi \in \R^{2\times 2}:
\lambda_1(\xi)\lambda_2(\xi)<ab, \,\lambda_2(\xi)<b\}
\vspace{0.2cm}
\\
\phantom{E_{\delta}^{(a,b)} }=\textnormal{int Rco}\{\xi \in
\R^{2\times 2}: (\lambda_1(\xi),
\lambda_2(\xi))=(a,b)\}\vspace{0.2cm}
\\\phantom{E_{\delta}^{(a,b)} }
\subseteq \textnormal{int Rco} E;
\end{array}
\]
this implies, if we pass to the union over $K$, that
$$
E_{\delta}\subseteq \textnormal{int Rco} E\,.
$$
From this inclusion we can infer that
$$
\textnormal{Rco}E_{\delta}\subseteq
\textnormal{Rco}(\textnormal{int Rco} E)=\textnormal{int Rco} E\,,
$$
as the interior of $\textnormal{Rco}\,E$ is rank one convex. In
fact let $\xi, \xi+A \in \textnormal{int\,Rco}E,$ with $rk(A)=1,$
that is, thank to theorem \ref{rcoegise}, for every $\theta \in
[0,\max\limits_{(a,b)\in K }b]$
\begin{equation}
\begin{array}{l}
\lambda_1(\xi)\lambda_2(\xi)+
\theta(\lambda_2(\xi)-\lambda_1(\xi))< \max\limits_{(a,b)\in
K}ab+\theta(b-a)
\\
\lambda_1(\xi+A)\lambda_2(\xi+A)+
\theta(\lambda_2(\xi+A)-\lambda_1(\xi+A))< \max\limits_{(a,b)\in
K}ab+\theta(b-a)\,.
\end{array}
\label{disinternoproprappross}
\end{equation}
We want to show that $$\xi+sA\in \textnormal{int\,Rco}E, s\in
[0,1].$$ Surely $\xi+sA\in \textnormal{Rco}E,$ because $\xi, \xi +
A \in \textnormal{Rco}E.$ Now, let us suppose that there exists
$\thetabar \in [0,\max\limits_{(a,b)\in K}b]$ such that
\begin{equation}
\max\limits_{(a,b)\in
K}ab+\thetabar(b-a)=\lambda_1(\xi+sA)\lambda_2(\xi+sA)+
\thetabar(\lambda_2(\xi+sA)-\lambda_1(\xi+sA))\,.
\label{thetabarapprossimazione} \end{equation} We can assume that
$\thetabar \neq \max\limits_{(a,b)\in K}b$. In fact, due to
(\ref{uguale})
$$[\max\limits_{(a,b)\in K}b]^2=\max\limits_{(a,b)\in
K}\{ab+[\max\limits_{(a,b)\in K}b](b-a)\}\,;$$ therefore if we
choose $\theta=\max\limits_{(a,b)\in K}b$ in
\rife{disinternoproprappross} and in
\rife{thetabarapprossimazione} we have
$\lambda_2(\xi),\lambda_2(\xi+A)<\max\limits_{(a,b)\in K}b\,$ and
$\lambda_2(\xi+sA)=\max\limits_{(a,b)\in K}b\,.$ This is a
contradiction as $\lambda_2$ is a norm over $\R^{2\times 2}$.
Therefore we can write
$$
\thetabar^2<\max\limits_{(a,b)\in
K}ab+\thetabar(b-a)=\lambda_1(\xi+sA)\lambda_2(\xi+sA)+
\thetabar(\lambda_2(\xi+sA)-\lambda_1(\xi+sA))\,.
$$
Using the expression of  the function $H_{\thetabar}$ defined in
proposition \ref{propfunzioneHtheta}, we  have
\[
\begin{array}{l}
H_{\thetabar}(\xi+sA)=
\lambda_1(\xi+sA)\lambda_2(\xi+sA)+\thetabar(\lambda_2(\xi+sA)-\lambda_1(\xi+sA))-\thetabar^2
\\\phantom{H_{\thetabar}(\xi+sA)}=\max\limits_{(a,b)\in K}ab+\thetabar(b-a)-\thetabar^2>0.
\end{array}
\] Thanks
to the fact that $H_{\thetabar}(\xi)$ is rank one convex, from
proposition \ref{propfunzioneHtheta}
\[
0<H_{\thetabar}(\xi+sA)\leq
sH_{\thetabar}(\xi+A)+(1-s)H_{\thetabar}(\xi)
\leq\max\{H_{\thetabar}(\xi),H_{\thetabar}(\xi+A)\}\,.
\]
Without loss of generality we can assume that
$\max\{H_{\thetabar}(\xi),H_{\thetabar}(\xi+A)\}=H_{\thetabar}(\xi+A)\,.$
If $H_{\thetabar}(\xi+A)=0$ we have a contradiction. If
$H_{\thetabar}(\xi+A)>0,$ we have, as $\xi+A \in
\textnormal{int\,Rco}E$
\[
\begin{array}{l}
H_{\thetabar}(\xi+A)=\lambda_1(\xi+A)\lambda_2(\xi+A)+
\thetabar(\lambda_2(\xi+A)-\lambda_1(\xi+A))-\thetabar^2\\\vspace{0.1cm}
< \max\limits_{(a,b)\in K}ab+\thetabar(b-a)-\thetabar^2
\end{array}
\]
and so we have obtained
$$
H_{\thetabar}(\xi+sA)=\max\limits_{(a,b)\in
K}ab+\thetabar(b-a)-\thetabar^2\leq
H_{\thetabar}(\xi+A)<\max\limits_{(a,b)\in
K}ab+\thetabar(b-a)-\thetabar^2
$$
which is a contradiction.

\noindent {{$2) \forall \,\,\varepsilon>0\, \exists\,
\delta_0=\delta_0(\varepsilon)>0: \mbox{dist}(\eta,E)\leq
\varepsilon \,\,\forall\,\,\, \eta \in E_{\delta},  \delta \in
[0,\delta_0]:$}}

Let $\eta\in E_{\delta}$; then there exists $(a,b)\in K$ such that
$ \eta \in E_{\delta}^{(a,b)}$. We define
$$
X=\matricesimmetrica{a}{0}{b}\in E\,.
$$
Let $A,B \in \mathcal{O}(2)$ be such that $A \eta
B=\matricesimmetrica{\lambda_1(\eta)}{0}{\lambda_2(\eta)}$. Then
$$
\normasemplice{\eta-A^{-1}X B^{-1}}=\normasemplice{A \eta B-A
A^{-1}X B^{-1} B}= \normasemplice{A \eta B-X}= \sqrt{2
\delta^2}\,.
$$
This implies that
$$
\textnormal{dist}(\eta, E)\leq
\normasemplice{\eta-A^{-1}XB^{-1}}=\sqrt{2 \delta^2}\to 0, \delta
\to 0;
$$
moreover this limit is uniform with respect to $\eta$.

\noindent {{$3) \mbox{If} \,\,\eta \,\in \mbox{intRco}E
\,\,\mbox{then}\,\, \eta \,\in \,\mbox{Rco}E_\delta \,\,
\forall\,\, \delta>0 \,\,\mbox{sufficiently small:} $}}

\noindent Let $\eta \in \mbox{intRco}E$; if
$(\lambda_1(\eta),\lambda_2(\eta))=(x,y)$ thanks to theorem
\ref{rcoegise} we have to show the following implication:
\[
\begin{array}{c}
xy+\theta(y-x)<\max\limits_{(a,b)\in K}ab+\theta(b-a)
\,\,\forall\,\theta \in [0,\max\limits_{(a,b)\in K}b]
\\
\Downarrow\vspace{0.2cm}
\\
xy+\theta(y-x)<\max\limits_{(a,b)\in
K}(a-\delta)(b-\delta)+\theta(b-a)\,
\end{array}
\] uniformly with respect to $\theta \in
[0,\max\limits_{(a,b)\in K}b-\delta]$. For this it is sufficient
to show that
$$
\lim\limits_{\delta\to 0}\max\limits_{(a,b)\in
K}(a-\delta)(b-\delta)+\theta(b-a)=\max\limits_{(a,b)\in
K}ab+\theta(b-a)
$$
uniformly with respect to $\theta \in [0,\max\limits_{(a,b)\in
K}b-\delta]$. We have, as $(a-\delta)(b-\delta)+\theta(b-a)\geq
0\quad\forall\,\,\delta \in [0,\min\limits_{(a,b)\in K}a/2]$,
\[
\begin{array}{l}
|\max\limits_{(a,b)\in
K}(a-\delta)(b-\delta)+\theta(b-a)-\max\limits_{(a,b)\in
K}ab+\theta(b-a)|\vspace{0.2cm}
\\
 \leq \max\limits_{(a,b)\in
K}|(a-\delta)(b-\delta)+\theta(b-a)-ab-\theta(b-a)| \vspace{0.3cm}
\\
 \leq \max\limits_{(a,b)\in
K}\delta(a+b+\delta)\to 0, \,\delta\to 0\,,
\end{array}
\]
uniformly with respect to $\theta \in [0,\max\limits_{(a,b)\in
K}b-\delta]$. Consequently we showed the third condition of the
approximation property too.
\end{proof}
\subsection{Proof of the existence theorem}
 We are going to recall an abstract existence theorem
(established by Dacorogna and Pisante \cite{Giovio}) that we will
apply.
\begin{thm}\label{Giovio}
Let $\Omega\subset\mathbb{R}^{2}$ be an open set. Let
$E\subset\mathbb{R}^{2\times 2}$ be a compact set. Assume that $E$
and $\textnormal{Rco}\,E$ have the approximation property with
$K(E_{\delta})=\textnormal{Rco}\,E_{\delta}$. Let $\varphi\in
C_{piec}^{1}\left( \overline{\Omega};\mathbb{R}^{2}\right)$ be
such that
\[
D\varphi\left(  x\right)  \in E\cup\operatorname*{int\,Rco}E\text{, a.e. in }%
\Omega.
\]
Then there exists $u\in\varphi+W_{0}^{1,\infty}\left(
\Omega;\mathbb{R}^{2}\right) $ such that
\[
Du\left(  x\right)  \in E\text{, a.e. in }\Omega.
\]
\end{thm}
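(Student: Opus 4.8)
The plan is to prove Theorem~\ref{Giovio} by the Baire category method of Dacorogna and Marcellini, in the refined abstract form of Dacorogna and Pisante. First I would set up the space of \emph{subsolutions}. Let
\[
V=\set{u\in \varphi+W_0^{1,\infty}(\Omega;\R^2):\ Du(x)\in \textnormal{int}\,\textnormal{Rco}\,E \text{ a.e. in }\Omega},
\]
and let $X=\overline{V}$ be its closure in the topology of uniform convergence. Since $E$ is compact, $\textnormal{Rco}\,E$ is bounded, so every $u\in V$ satisfies a common bound $\norma{Du}_{L^\infty}\le M$; hence $X$ is a bounded, closed, thus complete metric space for $d(u,v)=\norma{u-v}_{L^\infty}$, and every $w\in X$ still satisfies $Dw\in \textnormal{Rco}\,E$ a.e. The hypothesis $D\varphi\in E\cup\textnormal{int}\,\textnormal{Rco}\,E$ guarantees $V\neq\emptyset$: on the region where $D\varphi\in\textnormal{int}\,\textnormal{Rco}\,E$ one may take $\varphi$ itself after a routine piecewise affine reduction, while on the closed region where $D\varphi\in E$ the map $\varphi$ already solves the inclusion and need not be perturbed.

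Next I would introduce the functional $F:X\to[0,\infty)$, $F(u)=\int_\Omega \textnormal{dist}(Du(x),E)\,dx$. Two facts drive the argument. First, $F$ is of Baire class one on $X$: writing $F=\lim_{\nu\to\infty}F_\nu$ with $F_\nu(u)=\int_\Omega \textnormal{dist}\big(D(u*\rho_\nu)(x),E\big)\,dx$ for a mollifier $\rho_\nu$, each $F_\nu$ is continuous for the uniform metric on the bounded set $X$, and $F_\nu(u)\to F(u)$ pointwise by dominated convergence, the gradients lying in the bounded set $\textnormal{Rco}\,E$. Second, $F(u)=0$ if and only if $Du\in E$ a.e., because $E$ is closed. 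Since a Baire-one function on a complete metric space has a dense $G_\delta$ set $R$ of points of continuity, and $X\neq\emptyset$, it suffices to show that $F$ vanishes at every $u_0\in R$: any such $u_0\in X\subset\varphi+W_0^{1,\infty}(\Omega;\R^2)$ is then the desired solution (and in fact solutions are generic).

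The crux is the following \emph{approximation lemma}: for every $u\in V$ and every $\eps>0$ there exists $w\in V$ with $\norma{w-u}_{L^\infty}\le\eps$ and $F(w)\le\eps$. Granting it, for $u_0\in R$ I would combine density of $V$ in $X$ with the lemma to produce $w_n\in V$ with $w_n\to u_0$ in $L^\infty$ and $F(w_n)\to0$; continuity of $F$ at $u_0$ then forces $F(u_0)=0$. To prove the lemma I would first reduce to $u$ piecewise affine with gradients in a compact subset of $\textnormal{int}\,\textnormal{Rco}\,E$. Fix $\delta>0$ small: by condition~2 of the approximation property, $E_\delta\subset\set{\xi:\textnormal{dist}(\xi,E)\le\eps/2}$, and by condition~3, since the finitely many gradients $\xi$ of $u$ lie in $\textnormal{int}\,\textnormal{Rco}\,E=\textnormal{int}\,K(E)$, one has $\xi\in K(E_\delta)=\textnormal{Rco}\,E_\delta$ for $\delta$ small. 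On each affine piece I would express such a $\xi\in\textnormal{Rco}\,E_\delta$ through the laminate structure $\textnormal{Rco}\,E_\delta=\bigcup_i \textnormal{R}_i\textnormal{co}\,E_\delta$ of Remark~\ref{rcoe}, and realise it by a piecewise affine map, supported in a small cube and vanishing on its boundary, whose gradient takes values in $E_\delta$ outside a thin transition layer while staying in $\textnormal{Rco}\,E_\delta\subset\textnormal{int}\,\textnormal{Rco}\,E$ throughout, by condition~1. Tiling each affine piece of $u$ by a Vitali family of such cubes and superposing the rescaled building blocks yields $w\in V$; the cube size controls $\norma{w-u}_{L^\infty}$, while condition~2 together with the smallness of the transition layers and the boundedness of $\textnormal{dist}(\cdot,E)$ on $\textnormal{Rco}\,E$ controls $F(w)$.

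I expect the main obstacle to be exactly this lamination construction: producing, for each interior gradient $\xi$, an explicit piecewise affine map with zero boundary values whose gradient is driven into $E_\delta$ on most of the cube while never leaving $\textnormal{int}\,\textnormal{Rco}\,E$, and then patching these over $\Omega$ so that the result is still an admissible subsolution with simultaneously small $L^\infty$ oscillation and small $F$. This is where all three conditions of the approximation property are genuinely needed, and where the finite-step laminate description of $\textnormal{Rco}\,E_\delta$, rather than mere membership in the hull, must be used to keep the number and thickness of the transition layers under control.
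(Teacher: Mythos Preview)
The paper does not prove Theorem~\ref{Giovio} at all: it is stated there only as a quotation of an abstract existence result of Dacorogna and Pisante \cite{Giovio} (refining Dacorogna--Marcellini \cite{implicitdacomarc}), and is then applied as a black box together with Proposition~\ref{approximation} to deduce Theorem~\ref{teorema d'esistenza nel nostro caso dacorognamarcellini}. So there is no ``paper's own proof'' to compare against; your proposal is effectively a reconstruction of the argument in the cited references.

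That said, your outline is the correct Baire category scheme: the space $V$ of strict subsolutions, its $L^\infty$-closure $X$ as a complete metric space, the Baire-one functional $F(u)=\int_\Omega\textnormal{dist}(Du,E)$, and the approximation lemma driven by the three conditions of the approximation property are exactly the ingredients of the Dacorogna--Marcellini/Dacorogna--Pisante method. Your identification of the lamination construction as the technical heart is accurate, and your use of the finite-step description $\textnormal{Rco}\,E_\delta=\bigcup_i \textnormal{R}_i\textnormal{co}\,E_\delta$ to build the piecewise affine corrector is the right mechanism. One point that deserves slightly more care than you give it is the nonemptiness of $V$ and the handling of the set where $D\varphi\in E$: in the piecewise $C^1$ setting this is done by first splitting $\Omega$ into the open set where $D\varphi\in\textnormal{int}\,\textnormal{Rco}\,E$ and its complement, and working only on the former, but this is routine and is treated in the cited works.
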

Our theorem then follows immediately. In fact, $E$ is compact and
we verified that $E$ and Rco$E$ have the approximation property in
proposition \ref{approximation}. Then we obtain the existence
theorem \ref{teorema d'esistenza nel nostro caso
dacorognamarcellini} thanks to theorem \ref{Giovio}.
\section{Representation of ${\text{Rco}}\,E$}
In this section we are going to give an explicit formula of
Rco$\,E$ for a set $E$ (\ref{notreE}) defined by a set $K$
composed by a finite number of points. We will treat the cases in
which $K$ is composed by one, two elements, and finally a
particular $K$ composed by three elements; for the general formula
and for its proof we refer to \cite{gisedottoratotesi}. We recall
that the representation of ${\text{Rco}}\,E$, for $E$ defined by
$K$ composed by one element was already obtained by Dacorogna and
Marcellini (see \cite{implicitdacomarc}); Cardaliaguet and Tahraoui
in \cite{card-tah} showed the formula for the case of $K$ composed
by two elements.

To give the representation of ${\text{Rco}}\,E$ it is sufficient
to give the formula of the function $\sigma$ defined by
(\ref{sigmacard-ta}): indeed ${\text{Rco}}\,E=\{\xi \in
\R^{2\times 2}: \lambda_2(\xi)\leq \sigma(\lambda_1(\xi))\}\,.$
\begin{prop}
Let $E$ be defined by (\ref{notreE}) with $K=\{(a,b), 0<a\leq
b\}.$ Then
$$
\sigma(x)=\inf\left\{\frac{ab}{x},b\right\}\,.
$$
\end{prop}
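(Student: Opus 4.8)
The plan is to specialize formula \rife{sigmagise} of Proposition \ref{prelim} to our one-point set and then reduce the computation of $\sigma$ to an elementary one-variable monotonicity problem. Since $K=\{(a,b)\}$, we have $\max_{(a,b)\in K}b=b$ and $m(\theta)=ab+\theta(b-a)$, so Proposition \ref{prelim} gives
$$
\sigma(x)=\min_{\theta\in[0,b]}\frac{\theta x+ab+\theta(b-a)}{x+\theta}\,.
$$
I would then set $f(\theta)=\frac{\theta x+ab+\theta(b-a)}{x+\theta}$ for $x>0$ and study its behaviour on $[0,b]$ by differentiating in $\theta$.

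The decisive computation is the derivative. Applying the quotient rule, all the $\theta$-dependent terms in the numerator cancel and it collapses to the $\theta$-independent expression $x^2+(b-a)x-ab=(x+b)(x-a)$, so that
$$
f'(\theta)=\frac{(x+b)(x-a)}{(x+\theta)^2}\,.
$$
Because $x+b>0$ and $(x+\theta)^2>0$, the sign of $f'$ is governed entirely by $x-a$. This is the whole point of the argument: $f$ is monotone on the \emph{entire} interval $[0,b]$, so its minimum is automatically attained at an endpoint and no search for interior critical points is needed.

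It then remains to evaluate the endpoints. A direct check gives $f(0)=ab/x$ and, after the cancellation $ab+b(b-a)=b^2$, also $f(b)=\frac{bx+b^2}{x+b}=b$. Consequently, for $x>a$ the function $f$ is increasing and $\sigma(x)=f(0)=ab/x$; for $x<a$ it is decreasing and $\sigma(x)=f(b)=b$; and for $x=a$ it is constant, equal to $b=ab/a$. In every case $\sigma(x)=\min\{ab/x,b\}$, which is the claimed formula. The boundary value $x=0$ I would treat separately: there $f(\theta)=ab/\theta+(b-a)$ is decreasing, so $\sigma(0)=f(b)=b$, in agreement with the convention $ab/0=+\infty$ implicit in the $\inf$ of the statement.

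I do not anticipate any genuine obstacle: the argument is entirely elementary once the formula of Proposition \ref{prelim} is in hand. The only step requiring a little care is the derivative computation, and the pleasant feature there is precisely that the numerator of $f'$ is constant in $\theta$, which turns the minimization into a one-line case distinction on the sign of $x-a$.
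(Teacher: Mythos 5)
Your proposal is correct, and it is worth noting that the paper itself offers no proof of this proposition: Section 5 states the one-, two- and three-point formulas and defers all proofs to \cite{gisedottoratotesi}. Your argument is a clean, self-contained derivation inside the paper's own framework: you specialize formula \rife{sigmagise} of Proposition \ref{prelim} (which applies, since $0<a$ gives \rife{ipotesidipositivitadeipuntidiK}) to $m(\theta)=ab+\theta(b-a)$, and the key computation checks out --- the numerator of $f'(\theta)$ collapses to the $\theta$-independent quantity $x^2+(b-a)x-ab=(x+b)(x-a)$, so $f$ is monotone on all of $[0,b]$ and the minimum sits at an endpoint, with $f(0)=ab/x$, $f(b)=b$, and the case split on the sign of $x-a$ matching exactly the regime where $ab/x\lessgtr b$; your separate treatment of $x=0$ is also consistent with how the paper itself handles that boundary point via \rife{thetadiversodazero}. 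An alternative route available within the paper would be to read the formula off from Remark \ref{onepoint}: Dacorogna--Marcellini's one-point description $\textnormal{Rco}\,E=\{\xi:\lambda_2(\xi)\leq b,\ \lambda_1(\xi)\lambda_2(\xi)\leq ab\}$ together with Proposition \ref{policard} (and $\textnormal{Pco}\,E=\textnormal{Rco}\,E$, as shown in Theorem \ref{rcoegise}) forces $\sigma(x)=\inf\{ab/x,\,b\}$; that route leans on the cited literature, whereas yours buys a direct elementary verification from the paper's own representation of $\sigma$, which is arguably preferable for a statement the paper leaves unproved.
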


\begin{prop}
Let $E$ be defined by (\ref{notreE}) with
$K=\{(a_1,b_1),(a_2,b_2), 0<a_i\leq b_i, i=1,2\}.$ Without loss of
generality we can assume that $a_1\geq a_2$. Then

\noindent 1) If $a_1b_1>a_2b_2$ and $b_2>b_1$ then
$$
\sigma(x)=\inf\left\{\frac{a_1b_1}{x},b_2,
\frac{\theta(1,2)x+a_1b_1+\theta(1,2)(b_1-a_1)}{x+\theta(1,2)}\right\}\,
$$
where $\displaystyle
\theta(1,2)=\frac{a_1b_1-a_2b_2}{b_2-a_2-(b_1-a_1)}$.

\noindent 2) If $b_1\geq b_2$ then
$$
\sigma(x)=\inf\left\{\frac{a_1b_1}{x}, b_1 \right\}\,.
$$

\noindent 3) If $b_1<b_2$ and $a_1b_1\leq a_2b_2$, then
$$
\sigma(x)=\inf\left\{\frac{a_2b_2}{x}, b_2 \right\}\,.
$$
\end{prop}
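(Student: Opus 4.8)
The plan is to start from Proposition~\ref{prelim}, which gives $\sigma(x)=\min_{\theta\in[0,\max_{(a,b)\in K}b]}g_x(\theta)$ with $g_x(\theta)=\frac{\theta x+m(\theta)}{x+\theta}$, and to exploit that for $K=\{(a_1,b_1),(a_2,b_2)\}$ the function $m$ is a maximum of the two affine functions $\ell_i(\theta)=a_ib_i+\theta(b_i-a_i)$, $i=1,2$. Thus $m$ is piecewise affine with at most one breakpoint, the two lines meeting at $\theta^\ast=\frac{a_1b_1-a_2b_2}{(b_2-a_2)-(b_1-a_1)}=\theta(1,2)$. The whole proof then consists in identifying, in each of the three cases, which line realises $m$ and where, and reading off $\sigma(x)$ as a minimum of $g_x$ over finitely many points.

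The key computation I would carry out first is that on each affine piece $m\equiv\ell_i$ the function $g_x$ is monotone in $\theta$: differentiating $\frac{\theta x+a_ib_i+\theta(b_i-a_i)}{x+\theta}$ gives a numerator equal to $(x-a_i)(x+b_i)$, which is independent of $\theta$ and has the sign of $x-a_i$. Since $g_x$ is continuous on $[0,\max_{(a,b)\in K}b]$ and monotone on each linear segment of $m$, it attains its minimum at a breakpoint, i.e.\ at one of the points $0$, $\theta^\ast$, $\max_{(a,b)\in K}b$. I would also record the two evaluation identities that make the formulas explicit: $g_x(0)=m(0)/x$, and $g_x(b_i)=b_i$ whenever $\ell_i$ is the active line at $\theta=b_i$.

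For cases 2 and 3 I would show that a single line dominates on the whole interval, so that $m$ reduces to one affine function and the preceding one-point proposition (see also Remark~\ref{onepoint}) applies verbatim. As $\ell_1-\ell_2$ is affine in $\theta$, it suffices to check its sign at the two endpoints. In case 2 ($b_1\ge b_2$, with the standing assumption $a_1\ge a_2$) one has $a_1b_1\ge a_2b_2$ at $\theta=0$ and $\ell_1(b_1)-\ell_2(b_1)=(b_1-b_2)(b_1+a_2)\ge0$ at $\theta=\max_{(a,b)\in K}b=b_1$; hence $\ell_1\ge\ell_2$ throughout, $m\equiv\ell_1$, and $\sigma(x)=\inf\{a_1b_1/x,\,b_1\}$. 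Case 3 ($b_1<b_2$, $a_1b_1\le a_2b_2$) is symmetric: $a_2b_2\ge a_1b_1$ at $\theta=0$ and $\ell_2(b_2)-\ell_1(b_2)=(b_2-b_1)(b_2+a_1)\ge0$ at $\theta=\max_{(a,b)\in K}b=b_2$, so $m\equiv\ell_2$ and $\sigma(x)=\inf\{a_2b_2/x,\,b_2\}$.

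In case 1 ($a_1b_1>a_2b_2$ and $b_2>b_1$) the two lines genuinely cross inside the interval, and the first step is to check $0<\theta^\ast<b_2=\max_{(a,b)\in K}b$: the numerator $a_1b_1-a_2b_2$ and the denominator $(b_2-b_1)+(a_1-a_2)$ of $\theta^\ast$ are both positive, so $\theta^\ast>0$, while $\ell_2(b_2)-\ell_1(b_2)=(b_2-b_1)(b_2+a_1)>0$ forces $\theta^\ast<b_2$. Thus $\ell_1$ is active on $[0,\theta^\ast]$ and $\ell_2$ on $[\theta^\ast,b_2]$, with $m(0)=a_1b_1$; by the monotonicity reduction the minimum of $g_x$ is the least of $g_x(0)=a_1b_1/x$, $g_x(b_2)=b_2$, and $g_x(\theta^\ast)=\frac{\theta^\ast x+a_1b_1+\theta^\ast(b_1-a_1)}{x+\theta^\ast}$, which is exactly the stated infimum. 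The main obstacle I anticipate is precisely this monotonicity observation on each affine piece together with the verification that $\theta^\ast$ lies strictly inside the interval in case 1: once these are in place the three formulas are merely the values of $g_x$ at its (at most three) breakpoints, and the case distinctions collapse to the two endpoint sign checks $(b_1-b_2)(b_1+a_2)$ and $(b_2-b_1)(b_2+a_1)$. The derivative factorisation $(x-a_i)(x+b_i)$ is what makes the whole argument clean, so I would verify it with particular care, since the entire reduction rests on it.
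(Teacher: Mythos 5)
Your proof is correct, and it is worth noting that the paper itself does not prove this proposition at all: it states the two- and three-point formulas without proof, deferring to \cite{card-tah} and to \cite{gisedottoratotesi} (indeed, even in the proof of Lemma \ref{numerofinitodipunti} the two-point formula for $\sigma$ is simply quoted from \cite{card-tah}). So your argument is a genuinely self-contained alternative, and a natural one, since it runs entirely on the paper's own Proposition \ref{prelim}. The key factorisation checks out: on a piece where $m=\ell_i$ one has $g_x(\theta)=\frac{\theta(x+b_i-a_i)+a_ib_i}{x+\theta}$, whence $g_x'(\theta)=\frac{(x+b_i-a_i)x-a_ib_i}{(x+\theta)^2}=\frac{(x-a_i)(x+b_i)}{(x+\theta)^2}$, so $g_x$ is indeed monotone (possibly constant, when $x=a_i$, which your breakpoint reduction still covers) on each affine piece, and the minimum is read off at $0$, $\theta^\ast$, $\max_{(a,b)\in K}b$. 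Your endpoint sign checks $(b_1-b_2)(b_1+a_2)$ and $(b_2-b_1)(b_2+a_1)$ are correct and, with affinity of $\ell_1-\ell_2$, do settle cases 2 and 3; in case 1 the strict positivity of numerator and denominator of $\theta^\ast$ together with $\ell_2(b_2)>\ell_1(b_2)$ place $\theta^\ast$ strictly inside $(0,b_2)$, and since $\ell_1(\theta^\ast)=\ell_2(\theta^\ast)$ your third candidate value agrees with the stated one (and with Lemma \ref{numerofinitodipunti} after swapping indices, where the crossing line is written via the other point). Two small points you should make explicit in a final write-up: at $x=0$ the evaluation $g_x(0)=m(0)/x$ must be read as $+\infty$ (consistent with \rife{thetadiversodazero}, the infimum then being $\max_{(a,b)\in K}b$, as your formulas give); and the identity $g_x(b_i)=b_i$ requires checking that $\ell_i$ is the active line at the right endpoint, which your endpoint sign computations already provide. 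What your route buys over the paper's citation is transparency and generality: the same piecewise-monotonicity reduction proves the three-point proposition and the general finite-$K$ formula by evaluating $g_x$ at the breakpoints of $m$, exactly the structure visible in the stated formulas.
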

\begin{prop}
Let $E$ be defined by (\ref{notreE}) with $K=\{(a_i,b_i),
0<a_i\leq b_i, i=1,2,3\}.$ We define
$$
\theta(i,j)=\frac{a_ib_i-a_jb_j}{b_j-a_j-(b_i-a_i)}.
$$
Let us suppose that $a_1b_1>a_2b_2>a_3b_3,$ $b_3>\max\{b_1,b_2\},$
$b_2-a_2> b_1-a_1$ and $\theta(1,2)< \theta(1,3)\,.$ Then
$$
\sigma(x)=\inf\left\{\frac{a_1b_1}{x},
b_3,\frac{x\theta(j,j+1)+a_jb_j+\theta(j,j+1)(b_j-a_j)}{x+\theta(j,j+1)},
j=1,2 \right\}\,.
$$
\end{prop}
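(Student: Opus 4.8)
The plan is to start from the formula for $\sigma$ obtained in Proposition \ref{prelim}, namely $\sigma(x)=\min_{\theta\in[0,b_3]}g_x(\theta)$ with $g_x(\theta)=\frac{\theta x+m(\theta)}{x+\theta}$, where I write $b_3=\max_{(a,b)\in K}b$ (legitimate, since the hypothesis $b_3>\max\{b_1,b_2\}$ makes $b_3$ the largest of the three). For a set of three points, $m(\theta)=\max\{m_1(\theta),m_2(\theta),m_3(\theta)\}$ with $m_i(\theta)=a_ib_i+\theta(b_i-a_i)$ is the upper envelope of three affine functions, hence convex and piecewise affine on $[0,b_3]$. The whole computation reduces to (a) identifying the successive affine pieces of this envelope together with the breakpoints, and (b) minimising $g_x$ on each piece.

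First I would fix the geometry of the envelope. At $\theta=0$ the ordering $a_1b_1>a_2b_2>a_3b_3$ of the intercepts puts $m_1$ on top. Since $b_2-a_2>b_1-a_1$ and $a_1b_1>a_2b_2$, the crossing abscissa $\theta(1,2)$ is positive and line $2$ overtakes line $1$ there. The hypothesis $\theta(1,2)<\theta(1,3)$ serves two purposes: it forces $\theta(1,3)>0$ as well (so that line $3$ has strictly larger slope than line $1$, i.e.\ $b_3-a_3>b_1-a_1$), and it guarantees that line $2$ overtakes line $1$ strictly before line $3$ does, so that at $\theta(1,2)$ line $3$ is still below the common value $m_1(\theta(1,2))=m_2(\theta(1,2))$ and line $2$ genuinely enters the envelope. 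Next, evaluating at $\theta=b_3$ and using the identity \rife{uguale}, one checks that $b_3^2-m_i(b_3)=(b_3-b_i)(b_3+a_i)>0$ for $i=1,2$ (here $b_3>\max\{b_1,b_2\}$ is used), so $m_3$ strictly dominates at $b_3$; comparing $m_3$ with $m_2$ (which is above $m_3$ at $\theta=0$) this forces $b_3-a_3>b_2-a_2$ and produces a crossing $\theta(2,3)\in(0,b_3)$. Finally, $\theta(2,3)\le\theta(1,2)$ is impossible, since then line $3$ would meet or exceed $m_1=m_2$ already at $\theta(1,2)$, contradicting $\theta(1,2)<\theta(1,3)$. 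Thus the active pieces, in order, are $m_1$ on $[0,\theta(1,2)]$, $m_2$ on $[\theta(1,2),\theta(2,3)]$ and $m_3$ on $[\theta(2,3),b_3]$, with $0<\theta(1,2)<\theta(2,3)<b_3$.

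The minimisation step is then mechanical. On the piece where $m=m_i$, a direct computation gives
\[
\frac{d}{d\theta}\,g_x(\theta)=\frac{(x-a_i)(x+b_i)}{(x+\theta)^2},
\]
whose sign is constant, equal to that of $x-a_i$ since $x+b_i>0$; hence $g_x$ is monotone on each affine piece and its minimum over $[0,b_3]$ is attained at one of the breakpoints $\theta\in\{0,\theta(1,2),\theta(2,3),b_3\}$. Evaluating there, with $m(0)=a_1b_1$, $m(b_3)=b_3^2$ (so $g_x(b_3)=b_3$ by \rife{uguale}), $m(\theta(1,2))=m_1(\theta(1,2))$ and $m(\theta(2,3))=m_2(\theta(2,3))$, reproduces exactly the four quantities
\[
\frac{a_1b_1}{x},\qquad b_3,\qquad \frac{x\,\theta(1,2)+a_1b_1+\theta(1,2)(b_1-a_1)}{x+\theta(1,2)},\qquad \frac{x\,\theta(2,3)+a_2b_2+\theta(2,3)(b_2-a_2)}{x+\theta(2,3)},
\]
i.e.\ the terms for $j=1,2$ in the claimed infimum; taking their minimum gives the stated formula. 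The case $x=0$ is handled as in Proposition \ref{prelim}, the term $a_1b_1/x$ then simply dropping out.

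I expect the only genuinely delicate point to be the second paragraph: the bookkeeping showing that the three lines contribute to the envelope in the precise order $1,2,3$ and that the breakpoints satisfy $0<\theta(1,2)<\theta(2,3)<b_3$. Once the envelope is pinned down, the derivative identity turns the optimisation into a finite comparison and everything else is routine.
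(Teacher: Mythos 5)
Your proof is correct; note that the paper itself does not prove this proposition (Section 5 defers all proofs to \cite{gisedottoratotesi}), and your argument is exactly the natural instantiation of the paper's own machinery: the formula of Proposition \ref{prelim}, the identification of the upper envelope $m(\theta)=\max_i\{a_ib_i+\theta(b_i-a_i)\}$ as piecewise affine with breakpoints $0<\theta(1,2)<\theta(2,3)<b_3$, and the identity $g_x'(\theta)=(x-a_i)(x+b_i)/(x+\theta)^2$ on each affine piece, which reduces the minimization to a comparison of the four breakpoint values $a_1b_1/x$, the two $j$-terms, and $b_3$ (the latter via the paper's identity \rife{uguale}). Your envelope bookkeeping also checks out: $b_3>\max\{b_1,b_2\}$ together with $a_1b_1>a_2b_2>a_3b_3$ forces $b_3-a_3>b_2-a_2>b_1-a_1$ and a crossing $\theta(2,3)\in(0,b_3)$, while $\theta(2,3)\le\theta(1,2)$ would indeed contradict $\theta(1,2)<\theta(1,3)$, so the pieces appear in the order $1,2,3$ and the finite comparison yields precisely the stated infimum.
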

\vspace{0.2cm} \textit{Acknowledgements.} I would like to thank
Bernard Dacorogna for having proposed me the subject of this
article and for his scientific help; I'm very grateful to the
referees for their precious remarks and suggestions. As well, I
beneficiated of some discussions with Ana Margarida Fernandes
Ribeiro and Giovanni Pisante.
\bibliographystyle{plain}
\bibliography{Croce_isotropic}

\end{document}